\theoremstyle{plain}
\newtheorem{thm}{Theorem}[section]
\newtheorem{exmp}[thm]{Example}
\newtheorem{lem}[thm]{Lemma}
\newtheorem{prop}[thm]{Proposition}
\newtheorem{cor}[thm]{Corollary}
\newtheorem{que}[thm]{Question}
\theoremstyle{definition}
\newtheorem{rem}[thm]{Remark}
\newtheorem{defn}[thm]{Definition}
\newcommand{\A}{\mathscr{A}}
\newcommand{\N}{\mathbb{N}}
\newcommand{\Z}{\mathbb{Z}}
\newcommand{\F}{\mathcal{F}}
\newcommand{\M}{\overline{\mathcal{M}}}
\newcommand{\ep}{\varepsilon}
\newcommand{\dn}{\Delta^{(n)}}
\newcommand{\htop}{h_{\textrm{top}}}
\newcommand{\B}{\mathscr{B}}
\newcommand{\set}[1]{\left\{#1\right\}}
\newcommand{\modn}[1]{\;(\text{mod }#1)}
\DeclareMathOperator{\diam}{diam}
\newcommand{\ra}{\rightarrow}
\DeclareMathOperator{\Prox}{Prox}
\DeclareMathOperator{\Rec}{Rec}
\DeclareMathOperator{\Tran}{Tran}
\DeclareMathOperator{\orbp}{Orb^+}
\begin{document}

\title{On $n$-scrambled tuples and distributional chaos in a sequence}
\author[J. Li]{Jian Li}
\date{\today}
\address[J. Li]{Department of Mathematics, Shantou University, Shantou, Guangdong, 515063, P.R. China
-- \and -- Department of Mathematics, University of Science and Technology of China,
Hefei, Anhui, 230026, P.R. China}
\email{lijian09@mail.ustc.edu.cn}

\author[P. Oprocha]{Piotr Oprocha}
\address[P.~Oprocha]{AGH University of Science and Technology, Faculty of Applied
Mathematics, al.
Mickiewicza 30, 30-059 Krak\'ow, Poland -- \and --  Institute of Mathematics\\ Polish Academy of Sciences\\
ul. \'Sniadeckich 8, 00-956 Warszawa, Poland} \email{oprocha@agh.edu.pl}
\begin{abstract}
The main aim of the present paper is to study relations between $n$-scrambled tuples and
their attraction-adherence properties with respect to various sequences of integers.
This extends previous research on relations between chaos in the sense of Li and Yorke
and distributional chaos with respect to a given sequence.
Moreover, we construct a system which is  $n$-distributionally chaotic but not $(n+1)$-chaotic.
\end{abstract}
\keywords{Li-Yorke chaos, distributional chaos, scrambled tuple, scrambled set}
\subjclass[2010]{54H20, 37D45}
\maketitle

\section{Introduction}

One of the most known definitions of chaos expressed in terms of dynamics of pairs emerged from
the paper by Li and Yorke \cite{LY75} more than 35 years ago.
However, it is not the only definition of this kind by far.
For example, Schweizer and Sm\'{\i}tal proposed in \cite{SS94} to
extend Li and Yorke approach by measuring lower and upper densities of the rate of proximality of pairs.
Other authors realized that some dynamical systems can be distinguished by looking on dynamics of tuples,
e.g. uniformly positive entropy of order $n$ \cite{HY06} and $n$-scrambled tuples \cite{X05}.
In particular, interval maps with zero topological entropy can have scrambled pairs,
but never contain scrambled triples \cite{L11}.
It can also be proved, that while there are many maps chaotic in the sense of Li and Yorke
which are not distributionally chaotic, there always exists
a sequence along which calculated lower and upper densities behave as in the definition of distributional chaos
(see \cite{O10} or \cite{LT10}).
Note that such a situation is quite natural in dynamics (e.g. compare the definition of entropy and sequence entropy).

In this paper we combine two above mentioned approaches, that is $n$-scrambled tuples and
densities of proximality, and  follow the ideas from \cite{TX09},
expressing chaos in terms of attaching and adherence of points defined with respect to Furstenberg families.
The paper is organized as follows. In Section 2, we introduce main concepts, terminology and notation.
In Section 3, some basic properties of the attraction and adherence of sets via Furstenberg families are discussed.
In Section 4, we investigate scrambled tuples and $n$-chaos via Furstenberg families.
We also provide a general condition for proving $n$-distributional chaos in a sequence.
In Section 5, we investigate uniformly chaos and show that every uniformly chaotic set is
$n$-distributionally chaotic in some sequence for every $n\geq 2$.
In Section 6, we restrict our considerations to interval maps.
For interval maps with topological entropy zero we provide a class of sequences
that may never lead to distributional chaos.
Finally, we construct an example showing that existence of an uncountable distributionally $n$-scrambled set
may be not enough for an uncountable $(n+1)$-scrambled set to exist
(however, our example still contains some $(n+1)$-scrambled tuples).

\section{Preliminaries}
Denote by $\N$ the set of all positive integers, and put $\N_0=\N\cup\{0\}$.
We will denote by $\B$ the set of all infinite subsets of $\N$. Since
every infinite strictly increasing sequence in $\N$ uniquely defines a set $Q\in \B$ and vice-versa, we will
not distinguish between sequences and sets in $\B$.

Recall that a closed Hausdorff space is \emph{perfect} if it has no isolated points,
and a \emph{Cantor  space} if it is a non-empty, compact, totally disconnected, perfect metrizable space.
We say that a subset in a Hausdorff space is a \emph{Cantor set} if it is a Cantor space with respect to
the relative topology, and a \emph{Mycielski set} if it can be presented as a countable
union of Cantor sets.

Throughout this paper the pair $(X,f)$ always denotes a \emph{(topological) dynamical system}
(or TDS for short), where $X$ is a non-empty compact metric space endowed with a metric $d$
and $f$ is a continuous map from $X$ into itself.
A non-empty closed invariant subset $Y\subset X$ defines naturally a \emph{subsystem}
$(Y,f)$ of $(X,f)$.

For $n\geq 2$, we denote by $(X^n, f^{(n)})$
the $n$-fold product system $(X\times X \times\cdots\times X, f\times f\times\cdots\times f)$, we put
$\Delta_n=\{(x,x,\ldots,x)\in X^n: x\in X\}$ and
$\Delta^{(n)}=\{(x_1,x_2,\ldots,x_n)\in X^n: x_i=x_j \text{ for some }i\neq j\}$. For $\delta>0$,
put $[A]_\delta=\{x\in X: \inf_{y\in A}d(x,y)<\delta\}$.

For $(X,f)$ and $n \geq 1$, we define the set of recurrent $n$-tuples and proximal $n$-tuples, by respectively,
\begin{align*}
\Rec_n(f)&=\{(x_1,\dotsc,x_n):\, \forall \varepsilon>0,\, \exists k\in\mathbb{N},\,
\textrm{s.t. } d(f^k(x_i),x_i)<\varepsilon, \forall i\}\\
\Prox_n(f)&=\{(x_1,\dotsc,x_n):\, \forall \varepsilon>0,\, \exists k\in\mathbb{N},\,
\textrm{s.t. } d(f^k(x_i),f^k(x_j))<\varepsilon, \forall i,j\}.
\end{align*}
For simplicity of notation, we write $Rec(f)=Rec_1(f)$ and $Prox(f)=Prox_2(f)$.
Clearly $Rec(f^{(n)})=Rec_n(f)$ and both sets $Rec_n(f)$ and $Prox_n(f)$ are $G_\delta$ subsets of $X^n$.

Recall that a system $(X,f)$ is  \emph{(topologically) transitive} if
for every two non-empty open subsets $U$ and $V$ of $X$ there exists an integer $n>0$
such that  $U\cap f^{-n}(V)\neq\emptyset$;
\emph{(topologically) weakly mixing} if $f\times f$ is transitive;
\emph{(topologically) exact} if for every non-empty open subset $U$ of $X$
there exists an integer $n>0$ such that $f^n(U)=X$.

By $\omega(x,f)$ we denote the $\omega$-limit set of $x$, that is the set of limits points of the positive orbit of $x$,
$\orbp(x,f)=\set{x,f(x),f^2(x),\ldots}$ treated as a sequence.
A point $x\in X$ is called a \emph{transitive point} if $\omega(x,f)=X$.
It is easy to see that if $(X,f)$ is transitive, then the set of all transitive points,
denoted by $\Tran(f)$, is a dense $G_\delta$ subset of $X$.

In \cite{LY75}, Li and Yorke initiated a study of dynamics of pairs as a tool in the description of
the complexity of a dynamical system.
This definition can be extended in a natural way to $n$-tuples, e.g. as done by Xiong in \cite{X05}.
Let us recall this definition here:

\begin{defn}
Let $(X,f)$ be a TDS, fix an integer $n\geq 2$ and $\delta>0$. We say that a tuple $(x_1,x_2,\ldots,x_n)\in X^n$ is \emph{$n$-$\delta$-scrambled}  if
$$
\liminf_{k\to\infty}\max_{1\leq i<j\leq n}d\left(f^k(x_i), f^k(x_j)\right)=0
$$
and
$$
\limsup_{k\to\infty}\min_{1\leq i<j\leq n}d\left(f^k(x_i), f^k(x_j)\right)>\delta.
$$
\end{defn}

\begin{defn}
Given an integer $n\geq 2$ and $\delta>0$, a subset $C$ of $X$ is called \emph{$n$-$\delta$-scrambled} if every tuple $(x_1,x_2,\ldots,x_n)\in
C^n\setminus\Delta^{(n)}$ is $n$-$\delta$-scrambled.

If the above condition holds only with $\delta=0$, then we say that $C$ is \emph{$n$-scrambled}.
We say that a dynamical system $(X,f)$ is \emph{Li-Yorke $n$-chaotic} (resp.  \emph{Li-Yorke $\delta$-$n$-chaotic}),
if there exists an uncountable $n$-scrambled set (resp.  $\delta$-$n$-scrambled set).
\end{defn}

It is proved in \cite{X05} that if a non-periodic transitive system has a fixed point
then it is Li-Yorke $n$-chaotic for every $n\geq 2$.
Recently, it was proved in \cite{L11} that every zero entropy Li-Yorke $2$-chaotic interval map
does not contain any $3$-scrambled tuple.

In \cite{SS94}, Schweizer and Sm\'{\i}tal extended  the approach of Li and Yorke, introducing another kind of chaos,
which is presently called \emph{distributional chaos}.
In a natural the definition of Schweizer and Sm\'{\i}tal can be extended from pairs to tuples.
In this definition we can also calculate density functions over subsequences of iterates instead of $\N$
(e.g.  see \cite{Wang07}).
Let us state these extended definitions in a more formal way.

Let $(X,f)$ be a TDS and let $Q=\{q_k\}_{k=1}^{\infty}\in \B$.
For $x_1,x_2,\ldots,x_n\in X$, $t>0$ and $n\geq 2$, put
\[\Phi_{(x_1,x_2,\ldots,x_n)}(t|Q)=\liminf_{m\to\infty}\frac{1}{m}
\#\left\{1\le k\le m: \min_{1\leq i<j\leq n}d\left(f^{q_k}(x_i), f^{q_k}(x_j)\right) < t\right\}\]
and
\[\Phi^*_{(x_1,x_2,\ldots,x_n)}(t|Q)=\limsup_{m\to\infty}\frac{1}{m}
\#\left\{1\le k\le m: \max_{1\leq i<j\leq n}d\left(f^{q_k}(x_i), f^{q_k}(x_j)\right) < t\right\}.\]

\begin{defn}
Let $(X,f)$ be a TDS and $Q\in\B$.
A tuple $(x_1,x_2,\ldots,x_n)\in X^n$ is \emph{distributionally $n$-$\delta$-scrambled in the sequence $Q$} if
\begin{enumerate}[(1)]
\item $\Phi^*_{(x_1,x_2,\ldots,x_n)}(t|Q)=1$ for every $t>0$, and \\
\item $\Phi_{(x_1,x_2,\ldots,x_n)}(\delta|Q)=0$.
\end{enumerate}

A subset $D$ of $X$ is called \emph{distributionally $n$-$\delta$-scrambled in the sequence $Q$}
if every tuple $(x_1,x_2,\ldots,x_n)\in D^n\setminus\dn$ is distributionally $n$-$\delta$-scrambled in the sequence $Q$.
The system $(X,f)$ is called \emph{distributionally $n$-$\delta$-chaotic in the sequence $Q$}
if there exists an uncountable distributionally $n$-scrambled set in the sequence $Q$.

Similarly, we can define \emph{distributionally $n$-scrambled sets in the sequence $Q$}
and \emph{distributionally $n$-chaotic systems in the sequence $Q$}.
\end{defn}

When $Q=\N$, for simplicity we omit $Q$ in the above notation,
i.e. we write $\Phi_{(x_1,x_2,\ldots,x_n)}(t)$ instead of $\Phi_{(x_1,x_2,\ldots,x_n)}(t|\N)$,
distributionally $n$-scrambled instead of distributionally $n$-scrambled in the sequence $\N$, etc.

\section{Attraction, Adherence and relative densities of sets}
Recently many relations between Furstenberg families and properties of dynamical systems
were obtained by various authors, e.g. see \cite{A97,O10,TX09} and \cite{XLT07}.
Recall that a \emph{Furstenberg family} (or simply a \emph{family}) $\F$ is a collection of subsets of $\N$
which is \emph{upwards hereditary},
that is
\[F_1 \in \F \text{ and } F_1 \subset F_2 \quad \Longrightarrow \quad F_2 \in \F.\]
In this section,
we consider the attraction and adherence of sets via families defined by relative density.

Let $Q=\{q_k\}_{k=1}^\infty\in\B$ and $P\subset \N$. The \emph{upper density of $P$ with respect to $Q$}
is defined by
\[ \overline{d}(P\mid Q)=\limsup_{m\rightarrow\infty}\frac{\#(P\cap\{q_1,\cdots,q_m\})}{m},\]
where as usual $\#(A)$ denotes the cardinality of a set $A$.
For every $a\in [0,1]$, we define
\[\overline{\mathcal M}_Q(a)=\big\{P\subset \N: P\cap Q \textrm{ is infinite and }\overline{d}(P\mid Q)\geq a\big\}.\]
Clearly, $\overline{\mathcal M}_Q(a)$ is a Furstenberg family.

The following lemma comes from \cite{GX} where its utility
to dynamical systems was first presented. The paper \cite{GX} is hardly available,
but the careful reader should be able to prove this lemma by himself.

\begin{lem}\label{lem:SEQ}
Let $\{S_i\}_{i=1}^{\infty}$ be a sequence in $\B$.
Then there exists $Q\in \B$
such that $\overline{d}(S_i\mid Q)=1$ for every $i=1,2,\ldots$.
\end{lem}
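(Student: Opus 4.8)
The plan is to build $Q$ as a disjoint union of consecutive finite blocks, where the $k$-th block is drawn entirely from one of the sets $S_{i_k}$, and the indices $i_k$ are arranged so that every positive integer occurs among them infinitely often. By forcing each block to be far longer than the combined length of all the preceding blocks, the relative density of $S_i$ inside $Q$ gets pushed arbitrarily close to $1$ along the right endpoints of those blocks that were taken from $S_i$, and since each such index is visited infinitely often, this yields $\overline{d}(S_i\mid Q)=1$.

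More precisely, I would first fix a sequence $(i_k)_{k\geq 1}$ of positive integers in which every value appears infinitely often (for instance $1,1,2,1,2,3,1,2,3,4,\ldots$). Then I construct the blocks recursively. Suppose $B_1,\ldots,B_{k-1}$ have already been chosen; write $N_{k-1}=\sum_{j<k}|B_j|$ for their total cardinality (with $N_0=0$) and let $M_{k-1}$ be the largest element used so far. Since $S_{i_k}$ is infinite, I can select a finite set $B_k\subset S_{i_k}$ consisting of at least $\max(1,\,k\,N_{k-1})$ elements, all strictly greater than $M_{k-1}$. Setting $Q=\bigcup_{k\geq 1}B_k$ and listing its elements in increasing order as $q_1<q_2<\cdots$, the threshold condition guarantees that $Q$ is a genuine infinite strictly increasing sequence, so $Q\in\B$, and that the initial segment $\{q_1,\ldots,q_{N_k}\}$ is exactly $B_1\cup\cdots\cup B_k$.

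It then remains to verify that $\overline{d}(S_i\mid Q)=1$ for every $i$. Fix $i$ and let $k$ be any index with $i_k=i$; there are infinitely many such $k$. Taking $m=N_k$, the set $\{q_1,\ldots,q_{N_k}\}$ contains $B_k$, which lies entirely in $S_i$, so
\[ \frac{\#\big(S_i\cap\{q_1,\ldots,q_{N_k}\}\big)}{N_k}\ \geq\ \frac{|B_k|}{N_{k-1}+|B_k|}\ \geq\ \frac{k\,N_{k-1}}{N_{k-1}+k\,N_{k-1}}\ =\ \frac{k}{k+1}, \]
where the last two steps apply once $N_{k-1}\geq 1$, which holds for all large $k$. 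Letting $k\to\infty$ through indices with $i_k=i$ shows that the $\limsup$ defining $\overline{d}(S_i\mid Q)$ is at least $1$; as an upper density never exceeds $1$, we get $\overline{d}(S_i\mid Q)=1$. Moreover $B_k\subset S_i\cap Q$ for infinitely many $k$, so $S_i\cap Q$ is automatically infinite.

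The only delicate point is the bookkeeping: one must simultaneously serve each index $i$ infinitely often and let the blocks grow fast enough, and these requirements are compatible precisely because every $S_i$ is infinite, so arbitrarily long chunks of $S_{i_k}$ lying above any prescribed threshold are always available. No genuine estimate is hard here; the entire content of the lemma is the telescoping growth condition $|B_k|\gg N_{k-1}$, which is exactly what drives the relative density up to $1$.
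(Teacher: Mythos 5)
Your proof is correct. Note that the paper itself gives no proof of this lemma: it attributes the statement to the reference [GX] (described as ``hardly available'') and explicitly leaves the verification to the reader, so there is nothing to compare against line by line. Your block construction --- concatenating ever-longer finite chunks $B_k\subset S_{i_k}$ above all previously used integers, with $|B_k|\geq k\,N_{k-1}$ and each index recurring infinitely often --- is the natural argument the authors presumably had in mind, and every step checks out: the initial segment $\{q_1,\dotsc,q_{N_k}\}$ really is $B_1\cup\dotsb\cup B_k$, the monotonicity of $t\mapsto t/(N_{k-1}+t)$ justifies the displayed estimate, and evaluating the $\limsup$ along $m=N_k$ for the infinitely many $k$ with $i_k=i$ gives $\overline{d}(S_i\mid Q)=1$ together with the (needed for membership in $\overline{\mathcal M}_Q(1)$) fact that $S_i\cap Q$ is infinite.
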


Let $(X,f)$ be a TDS, $x\in X$ and $A$ be a subset of $X$ and let $\F$ be a family. We denote
$N(x,A)=\{n\in\N: f^n(x)\in A\}$
and say that a point $x\in X$ is:
\begin{enumerate}
\item  an \emph{$\F$-attaching point of $A$} if $N(x, A)\in \F$,
\item  an \emph{$\F$-adherent point of $A$} if for every $\ep>0$,
$x$ is an $\F$-attaching point of the set $[A]_\delta$.
\item  an \emph{$\F$-$\delta$-escaping point of $A$}
if $x$ is an $\F$-attaching point of the set $X\setminus \overline{[A]_\delta}$.
\end{enumerate}

Denote by $\overline{\mathcal M}_Q(a,A)$ the set of $\overline{\mathcal M}_Q(a)$-attaching points of $A$.
If we want to emphasize that the map $f$ is acting on $X$, we use the notation  $\overline{\mathcal M}_Q(a,A,f)$.

The following Lemma extends Theorem 3.2 of \cite{XLT07}.
\begin{lem}
Let $(X,f)$ be a TDS, $Q\in\B$ and $a\in [0,1]$ and let $A$ be a non-empty open subset of $X$.
Then $\overline{\mathcal M}_Q(a,A)$ is a $G_\delta$ subset of $X$.
\end{lem}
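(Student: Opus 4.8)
The plan is to write $\M_Q(a,A)$ explicitly as a countable intersection of open sets, by unwinding the two conditions hidden in the requirement $N(x,A)\in\overline{\mathcal M}_Q(a)$: that $N(x,A)\cap Q$ be infinite, and that $\overline{d}(N(x,A)\mid Q)\ge a$. First I would set $U_k=\{x\in X: f^{q_k}(x)\in A\}$; since $f^{q_k}$ is continuous and $A$ is open, each $U_k$ is open, and $x\in U_k$ precisely when $q_k\in N(x,A)$.

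The infiniteness of $N(x,A)\cap Q$ says exactly that $x\in U_k$ for infinitely many $k$, that is, $x\in E:=\bigcap_{N\ge 1}\bigcup_{k\ge N}U_k$, which is visibly $G_\delta$. For the density condition I would introduce the counting functions $g_m(x)=\frac1m\#\{1\le k\le m: x\in U_k\}$, so that $\overline{d}(N(x,A)\mid Q)=\limsup_{m\to\infty}g_m(x)$. The key observation is that each $g_m$ is lower semicontinuous: the event $g_m(x)>c$ is that $x$ lies in at least $\lfloor cm\rfloor+1$ of the open sets $U_1,\dots,U_m$, which is a finite union of finite intersections of the $U_k$, hence open.

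Next I would rewrite the condition $\limsup_m g_m(x)\ge a$ in a form adapted to $G_\delta$ sets. Here the one point that needs care is the correct handling of the inequality: $\limsup_m g_m(x)\ge a$ holds if and only if for every $j\ge 1$ one has $g_m(x)>a-\frac1j$ for infinitely many $m$. (Using the non-strict bound $\ge a$, or the single strict bound $>a$, would not yield an open description, so the strict approximation from below by $a-\frac1j$ is essential.) Consequently the set satisfying the density condition is
$$
D=\bigcap_{j\ge 1}\bigcap_{N\ge 1}\bigcup_{m\ge N}\Big\{x\in X: g_m(x)>a-\tfrac1j\Big\},
$$
and since each set in braces is open by the previous step, $D$ is $G_\delta$.

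Finally, $\M_Q(a,A)=E\cap D$ is an intersection of two $G_\delta$ sets, hence $G_\delta$. (When $a>0$, positive upper density already forces $N(x,A)\cap Q$ to be infinite, so $E$ is redundant and only matters in the boundary case $a=0$.) The whole argument is a routine descriptive-set-theoretic computation; the only genuine subtlety is the $\limsup$ reformulation, and, underlying it, the lower semicontinuity of the $g_m$, both of which rely crucially on $A$ being open so that the sets $U_k$ are open.
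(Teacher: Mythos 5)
Your proof is correct, and it follows the same basic strategy as the paper: introduce the counting functions $g_m(x)=\frac1m\#\{1\le k\le m: f^{q_k}(x)\in A\}$, use openness of $A$ to get openness of the super-level sets, and express $\{\limsup_m g_m\ge a\}$ as a countable intersection. There are, however, two points where your write-up is actually more careful than the paper's. First, the paper asserts that each $g_m$ is continuous and that $g=\limsup_m g_m$ is therefore lower semicontinuous, and then writes $\M_Q(a,A)=\bigcap_r\{g>a-1/r\}$ as an intersection of open sets; but $g_m$ is only lower semicontinuous (the indicator of an open set is not continuous), and a $\limsup$ of (lower semi)continuous functions need not be lower semicontinuous, so the sets $\{g>a-1/r\}$ are not obviously open. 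Your reformulation --- $\limsup_m g_m(x)\ge a$ iff for every $j$ one has $g_m(x)>a-1/j$ for infinitely many $m$ --- bypasses this entirely and exhibits the set directly as $\bigcap_j\bigcap_N\bigcup_{m\ge N}\{g_m>a-1/j\}$, a genuine $G_\delta$, which is the rigorous way to finish. Second, you explicitly account for the requirement in the definition of $\M_Q(a)$ that $N(x,A)\cap Q$ be infinite (your set $E$), which the paper silently drops; as you note this only matters when $a=0$, but including it makes the identification $\M_Q(a,A)=E\cap D$ exact rather than merely ``$G_\delta$ for a possibly different reason.'' In short: same skeleton, but your version repairs a small semicontinuity gap in the published argument.
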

\begin{proof}
Let $Q=\{q_k\}_{k=1}^\infty$. For $m\in\N$, put
\[g_m: X\to [0,1],\quad x\mapsto \frac{\#(\{1\leq k\leq m: f^{q_k}(x)\in A\})}{m}\]
and
\[g: X\to[0,1],\quad x\mapsto \overline{d}(N(x, A)\mid Q).\]
Clearly, $g(x)=\limsup_{m\to\infty}g_m(x)$.
Since $A$ is open, it is easy to see that each $g_m$ is continuous,
then $g$ is lower semi-continuous.
By the definition of semi-continuity, we have
$\{x\in X: g(x)> a-1/r\}$ is open for every $r\in\N$ and therefore
\[ \overline{\mathcal M}_Q(a,A)=\big\{x\in X: g(x)\geq a\big\}
=\bigcap_{r=1}^\infty\left\{x\in X: g(x)> a-\frac{1}{r}\right\}\]
is a $G_\delta$ subset of $X$.
\end{proof}

By the definition of adherence and the above lemma, we have
\begin{cor}\label{cor:G-delta}
Let $(X,f)$ be a TDS,  $Q\in\B$, $a\in [0,1]$ and $\delta>0$. Suppose that  $A$ is a subset of $X$.
Then
\begin{enumerate}
\item the set of $\overline{\mathcal M}_Q(a)$-adherent points of $A$ is a $G_\delta$ subset of $X$.
\item the set of $\overline{\mathcal M}_Q(a)$-$\delta$-escaping points of $A$ is a $G_\delta$ subset of $X$.
\end{enumerate}
\end{cor}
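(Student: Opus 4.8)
The plan is to reduce both items to the preceding lemma, which already establishes that $\overline{\mathcal M}_Q(a,U)$ is a $G_\delta$ set for every non-empty open $U\subset X$. The two statements differ only in which open set we feed into that lemma, together with a passage from an uncountable to a countable intersection in the first case.

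For item (1), I would first record that for each $\ep>0$ the neighborhood $[A]_\ep=\{x\in X:\inf_{y\in A}d(x,y)<\ep\}$ is open, being the preimage of $[0,\ep)$ under the $1$-Lipschitz map $x\mapsto\inf_{y\in A}d(x,y)$. By the definition of adherence, the set of $\overline{\mathcal M}_Q(a)$-adherent points of $A$ equals $\bigcap_{\ep>0}\overline{\mathcal M}_Q(a,[A]_\ep)$, an a priori uncountable intersection. The key step is to replace it by the countable intersection $\bigcap_{r=1}^\infty\overline{\mathcal M}_Q(a,[A]_{1/r})$. The inclusion $\subseteq$ is immediate. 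For $\supseteq$ I use that $[A]_{1/r}\subseteq[A]_\ep$ whenever $1/r<\ep$, so $N(x,[A]_{1/r})\subseteq N(x,[A]_\ep)$; since $\overline{\mathcal M}_Q(a)$ is upwards hereditary, membership of the smaller set forces membership of the larger, giving adherence at every $\ep$. Each factor $\overline{\mathcal M}_Q(a,[A]_{1/r})$ is $G_\delta$ by the lemma, and a countable intersection of $G_\delta$ sets is again $G_\delta$.

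For item (2), the set $X\setminus\overline{[A]_\delta}$ is open, so by definition the set of $\overline{\mathcal M}_Q(a)$-$\delta$-escaping points of $A$ is exactly $\overline{\mathcal M}_Q(a,X\setminus\overline{[A]_\delta})$, to which the lemma applies directly.

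The only genuine care needed, and the closest thing to an obstacle, is the degenerate situation where one of the open sets handed to the lemma is empty (say $[A]_{1/r}=\emptyset$ when $A=\emptyset$, or $X\setminus\overline{[A]_\delta}=\emptyset$ when $[A]_\delta$ is dense), since the lemma is stated only for non-empty open sets. Here I would check directly that no point can be an $\overline{\mathcal M}_Q(a)$-attaching point of $\emptyset$: indeed $N(x,\emptyset)=\emptyset$, and $\emptyset\cap Q$ is not infinite, so $\emptyset\notin\overline{\mathcal M}_Q(a)$. Hence the corresponding set of attaching points is empty, which is trivially $G_\delta$, and the conclusion of both items persists in these edge cases.
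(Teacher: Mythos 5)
Your proposal is correct and follows exactly the route the paper intends: the paper dispatches this corollary with the single line ``By the definition of adherence and the above lemma,'' and your write-up simply fills in the routine details (openness of $[A]_\ep$, the reduction of the uncountable intersection to $\bigcap_{r}\overline{\mathcal M}_Q(a,[A]_{1/r})$ via upward heredity, and the empty-set edge case). Nothing further is needed.
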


Recall that the \emph{floor} and \emph{ceiling} functions on real numbers are defined as, respectively,
\[\lfloor x \rfloor=\max\, \{m\in\mathbb{Z}\mid m\le x\} \ \textrm{and}\
\lceil x \rceil=\min\,\{n\in\mathbb{Z}\mid n\ge x\}.\]
For every $r\in\N$, define
\[\varphi_r:\N\to \N_0,\ n\mapsto \left\lfloor \frac{n}{r}\right\rfloor
\ \textrm{and}\
\phi_r: \N\to \N,\  n\mapsto \left\lceil \frac{n}{r}\right\rceil.\]
Observe that both functions $\varphi_r$ and $\phi_r$ are $r$-to-$1$.

\begin{lem} \label{lem:f-and-f-r}
Let $Q\in\B$, $\delta>0$ and $r\in \N$ and assume that $A$ is an $f$-invariant closed subset of $X$.
\begin{enumerate}[(1)]
\item\label{lem:f-and-f-r:c1} Let $P=\phi_r(Q)$. If $x$ is an $\overline{\mathcal M}_Q(a)$-adherent point of $A$ for $f$,
then $x$ is also an $\overline{\mathcal M}_P(a)$-adherent point of $A$ for $f^r$.
\item\label{lem:f-and-f-r:c2} Let $P=\varphi_r^{-1}(Q)$. If $x$ is an $\overline{\mathcal M}_Q(a)$-adherent point of $A$ for $f^r$,
then $x$ is also an $\overline{\mathcal M}_P(a)$-adherent point of $A$ for $f$.
\item\label{lem:f-and-f-r:c3} Let $P=\varphi_r(Q)$. If $x\in\overline{\mathcal M}_Q(a,X\setminus \overline{[A]_\delta},f)$,
then there exists $\delta'=\delta'(f,\delta,A)>0$ such that
$x\in \overline{\mathcal M}_P(a,X\setminus \overline{[A]_{\delta'}},f^r)$.
\item\label{lem:f-and-f-r:c4} Let $P=\phi_r^{-1}(Q)$. If $x\in \overline{\mathcal M}_Q(a,X\setminus \overline{[A]_\delta},f^r)$,
then there exists $\delta'=\delta'(f,\delta,A)>0$
such that $x\in \overline{\mathcal M}_P(a,X\setminus \overline{[A]_{\delta'}},f)$.
\end{enumerate}
\end{lem}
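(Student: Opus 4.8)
My plan is to derive all four parts from a single dynamical device — forward propagation of proximity to (and distance from) $A$ — and then to split the bookkeeping into the easy \emph{preimage} cases and the delicate \emph{image} cases. Since $X$ is compact, every iterate $f^{s}$ is uniformly continuous, and since $A$ is closed and $f$-invariant we have $f^{s}(A)\subseteq A$ for all $s\ge 0$. Together these give a propagation estimate in two forms. Adherence form: for each $\delta>0$ and $L\in\N$ there is $\eta=\eta(\delta,L)>0$ with $d(y,A)<\eta\Rightarrow d(f^{s}(y),A)<\delta$ for $0\le s\le L-1$, so $f^{k}(x)\in[A]_{\eta}$ forces $f^{k}(x),\dots,f^{k+L-1}(x)\in[A]_{\delta}$. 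Escaping form: for each $\delta>0,L\in\N$ there is $\delta'=\delta'(\delta,L)>0$ with $d(y,A)\le\delta'\Rightarrow d(f^{s}(y),A)<\delta$ for $0\le s\le L-1$, whence $f^{m}(x)\notin\overline{[A]_{\delta}}$ forces $f^{m-s}(x)\notin\overline{[A]_{\delta'}}$ for $0\le s\le L-1$. These are exactly the estimates that move membership between the orbits of $f$ and of $f^{r}$, the point being that the appropriate multiple of $r$ always sits inside the relevant length-$r$ window.

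In parts (2) and (4) the target index set is a preimage, $P=\varphi_{r}^{-1}(Q)$ respectively $P=\phi_{r}^{-1}(Q)$, that is a disjoint union of full length-$r$ blocks $\{rq,\dots,rq+r-1\}$ (resp. $\{rq-r+1,\dots,rq\}$) over $q\in Q$. Applying propagation with window $L=r$ turns each closeness/farness time $q$ of $f^{r}$ into an \emph{entire} such block of closeness/farness times of $f$. Since the first $rm$ elements of $P$ are precisely the $m$ blocks over $q_{1},\dots,q_{m}$, I obtain $\#\bigl(N(\,\cdot\,,f)\cap\{p_{1},\dots,p_{rm}\}\bigr)\ge r\cdot\#\{\text{good }q_{k}:k\le m\}$ against the denominator $rm$; the factor $r$ cancels, so $\overline{d}(\,\cdot\mid P)\ge\overline{d}(\{\text{good times}\}\mid Q)\ge a$. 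Choosing $\delta_{1}=\eta(\delta,r)$ in (2) and the corresponding $\delta'$ in (4) finishes these two parts, which are essentially routine (the infinitude of $P\cap P$ is inherited from the same inclusion).

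Parts (1) and (3) carry the real difficulty: here the target set is a forward image, $P=\phi_{r}(Q)$ resp. $P=\varphi_{r}(Q)$. Writing $G=N(x,[A]_{\eta},f)$ for the closeness times, one-step propagation still yields $\phi_{r}(G)\subseteq N(x,[A]_{\delta},f^{r})$ (and the escaping analogue via $\varphi_{r}$). What this cleanly gives, however, is only that the \emph{weighted} upper density of $N(x,[A]_{\delta},f^{r})$ — in which each $p\in P$ is weighted by $\#(\phi_{r}^{-1}(p)\cap Q)\in\{1,\dots,r\}$ — is at least $a$, since $\phi_{r}^{-1}\bigl(N(x,[A]_{\delta},f^{r})\bigr)\supseteq G$ and $\overline{d}(G\mid Q)\ge a$. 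The lemma instead demands the \emph{unweighted} density $\overline{d}(\,\cdot\mid\phi_{r}(Q))\ge a$, and the two can genuinely differ: because $\phi_{r}$ is $r$-to-$1$, an uneven collapse of $Q$ can depress the unweighted density by as much as a factor $r$ (one can write down $S,Q$ with $\overline{d}(S\mid Q)=\tfrac23$ but $\overline{d}(\phi_{2}(S)\mid\phi_{2}(Q))=\tfrac12$). \textbf{This weighted-versus-unweighted gap is the main obstacle}, and it shows the naive one-scale inclusion is insufficient.

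To close the gap I would use a free scale parameter to drive the propagation window $L\to\infty$. In part (1) the hypothesis is adherence at \emph{every} scale, so for the fixed target $\delta$ and each $L$ I apply propagation with window $L$ to get $\eta=\eta(\delta,L)$ and $N(x,[A]_{\delta},f)\supseteq G_{L}+\{0,\dots,L-1\}$ with $\overline{d}(G_{L}\mid Q)\ge a$ for $G_{L}=N(x,[A]_{\eta},f)$. In part (3) the hypothesis is at a single scale $\delta$, but the conclusion asks only for \emph{some} $\delta'$; since farness at scale $\delta$ propagates backward to farness at $\delta'=\delta'(\delta,L)$ over $L$ steps and shrinking $\delta'$ only enlarges the escaping set, I am again free to let $L\to\infty$. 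In either case each of the $\ge a$-density (in $Q$) relevant times $g$ of $f$ is thickened to a block of length $L$ containing $\lfloor L/r\rfloor$ multiples of $r$, i.e. contributes the whole run $\{\phi_{r}(g),\dots,\phi_{r}(g)+\lfloor L/r\rfloor-1\}$ (resp. its $\varphi_{r}$ analogue) to the $f^{r}$-set. Spreading each contribution over $\approx L/r$ consecutive fibers dilutes the collapse weights toward $1$, so the weighted–unweighted defect tends to $0$ and the limit forces $\overline{d}(\,\cdot\mid\phi_{r}(Q))\ge a$ (resp. $\overline{d}(\,\cdot\mid\varphi_{r}(Q))\ge a$). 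Making this limiting step quantitative is the one place I expect genuine work; conceptually it merely records that continuity plus invariance cannot leave a residue class permanently far from $A$ while a forward neighbour converges to it, and the $L\to\infty$ thickening is exactly the quantitative form of that fact.
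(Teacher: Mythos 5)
Your treatment of parts (2) and (4) coincides with the paper's and is correct: the preimage sets $\varphi_r^{-1}(Q)$ and $\phi_r^{-1}(Q)$ split into full length-$r$ blocks, each good time of $f^r$ propagates to a whole block of good times of $f$, and the factor $r$ cancels. Your diagnosis of parts (1) and (3) is also correct, and in fact sharper than the paper's own argument: the inclusion $\phi_r\big(N(x,[A]_{\ep'},f)\big)\subseteq N(x,[A]_{\ep},f^r)$ only controls a density in which each $p\in P$ is weighted by $\#(\phi_r^{-1}(p)\cap Q)$, and this genuinely differs from $\overline{d}(\,\cdot\mid\phi_r(Q))$. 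The paper's displayed inequality $\#\big(N(x,[A]_{\ep'},f)\cap\{q_1,\dots,q_k\}\big)\le r\,\#\big(N(x,[A]_{\ep},f^r)\cap\{p_1,\dots,p_{\phi_r(k)}\}\big)$ is only justified when every fiber of $\phi_r$ meets $Q$ in all $r$ of its points; already for $r=2$ and $Q$ the odd numbers, the first $k$ elements of $Q$ map bijectively onto the first $k$ (not the first $\lceil k/2\rceil$) elements of $P$, and the inequality can fail.

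The problem is that your proposed repair does not close this gap. The thickening $N(x,[A]_{\delta},f^r)\supseteq\phi_r(G_L)+\{0,\dots,\lfloor L/r\rfloor-1\}$ produces runs of consecutive \emph{integers}, whereas $\overline{d}(\,\cdot\mid P)$ counts only elements of $P$ against the enumeration of $P$: if $P$ has no points in $\big(\phi_r(g),\phi_r(g)+\lfloor L/r\rfloor\big)$, the run adds nothing to the numerator, while the ``bad'' elements of $P$ (fibers met by $Q$ in a single, non-good point) may lie far from every good run and still inflate the denominator. So the collapse weights are not diluted, and no choice of $L$ helps. Indeed, for $a\in(0,1)$ the conclusion of (1) is false, so no argument can complete your plan at that generality. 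Sketch: in $\Sigma_2^+$ let $A=\{0^\infty\}$, let $x$ have blocks of zeros covering intervals $G_j$ of $2T$ consecutive integers (inside ever longer zero-blocks, so adherence holds at every scale), let $B_j$ be $T$ isolated odd integers in the gaps with $x_{q+1}=1$ for $q\in B_j$, and let $Q=\bigcup_j(G_j\cup B_j)$. Then $\overline{d}(N(x,[A]_{\ep},\sigma)\mid Q)=2/3$ for every $\ep$, but $\phi_2$ collapses each $G_j$ two-to-one and is injective on each $B_j$, so $\overline{d}(N(x,[A]_{\ep},\sigma^2)\mid\phi_2(Q))=1/2<2/3$. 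What survives — and is all the paper ever uses — is the case $a=1$ (and the trivial $a=0$), and there a direct two-sided count works with no thickening: if at least $(1-\ep)m$ of $\{q_1,\dots,q_m\}$ are good, then among the $k'$ elements of $\phi_r(\{q_1,\dots,q_m\})$ at most $\ep m$ fail to be images of good points while at least $(1-\ep)m/r$ are good, giving a proportion of at least $\frac{(1-\ep)/r}{(1-\ep)/r+\ep}\to 1$. I would recommend you prove (1) and (3) only for $a\in\{0,1\}$ via this count, and note explicitly that the unrestricted statement fails.
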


\begin{proof}
Let $Q=\{q_k\}_{k=1}^\infty$.
For every $\ep>0$, by the continuity of $f$ and the compactness of $X$,
there exists $\ep'>0$ such that if $d(y,A)<\ep'$ then $d(f^i(y),A)<\ep$ for $i=0,1,\ldots,r-1$.
As a consequence, if we put $\delta=2\ep$ and $\delta'=\ep'/2$ then
$d(f^{m-i}(y),A)>\delta'$ for $i=0,1,\ldots,r-1$ provided that $d(f^m(y),A)>\delta$ and $m\geq r$.

First we prove \eqref{lem:f-and-f-r:c1}. Note that if $f^n(x)\in [A]_{\ep'}$
then $f^n(x), f^{n+1}(x),\ldots,f^{n+r-1}(x)\in [A]_\ep$, therefore
if $n \in N(x,[A]_{\ep'},f)$ then $\phi_r(n)\in N(x,[A]_{\ep},f^r)$.

Let $P=\phi_r(Q)$ and arrange $P$ as $\{p_k\}_{k=1}^\infty$.
Since $\phi$ is $r$-to-$1$, we have
\[\#\big(N(x,[A]_{\ep'},f)\cap \{q_1,\ldots,q_k\}\big)\leq r
\#\big(N(x,[A]_{\ep},f^r)\cap \{p_1,\ldots,p_{\phi_r(k)}\}\big)\] and
thus $\overline{d}(N(x,[A]_{\ep'},f)|Q)\leq \overline{d}(N(x,[A]_{\ep},f^r)|P)$.

If $x$ is an $\overline{\mathcal M}_Q(a)$-adherent point of $A$ for $f$,
that is for every $\ep'>0$, $\overline{d}(N(x,[A]_{\ep'},f)|Q)\geq a$, then
for every $\ep>0$ we have $\overline{d}(N(x,[A]_{\ep},f^r)|P)\geq a$.
This shows that $x$ is also an $\overline{\mathcal M}_P(a)$-adherent point of $A$ for $f^r$.

For the proof of \eqref{lem:f-and-f-r:c2}
observe that if $n\in N(x,[A]_{\ep'},f^r)$, then ${\varphi_r}^{-1}(n)\subset N(x,[A]_\ep,f)$.
Let $P=\varphi_r{}^{-1}(Q)$ and arrange $P$ as $\{p_k\}_{k=1}^\infty$.
Since $\varphi$ is $r$-to-$1$, we have
\[r \#\big(N(x,[A]_{\ep'},f^r)\cap \{q_1,\ldots,q_{k}\}\big)\leq
\#\big(N(x,[A]_{\ep},f)\cap \{p_1,\ldots,p_{rk}\}\big).\]
Hence $\overline{d}(N(x,[A]_{\ep'},f^r)|Q)\leq \overline{d}(N(x,[A]_{\ep},f)|P)$.
Now \eqref{lem:f-and-f-r:c2} follows by the same arguments as \eqref{lem:f-and-f-r:c1}.

Proofs of \eqref{lem:f-and-f-r:c3} and \eqref{lem:f-and-f-r:c4} have analogous proofs to \eqref{lem:f-and-f-r:c1} and \eqref{lem:f-and-f-r:c2},
thus we leave them to the reader.
\end{proof}

\section{Scrambled tuples and $n$-scrambled sets}

In this section, following the ideas contained in \cite{TX09},
we will use the notions of adherence and escaping to define scrambled tuples.

\begin{defn}
Let $(X,f)$ be a TDS, $\F_1$ and $\F_2$ be two families, $\delta>0$ and $n\geq 2$. We say that
a tuple $(x_1,x_2,\ldots,x_n)\in X^n$ is \emph{$(\F_1,\F_2)$-$n$-$\delta$-scrambled}
if $(x_1,x_2,\ldots, x_n)$ is an $\F_1$-adherent point of $\Delta_n$ and
an $\F_2$-$\delta$-escaping point of $\dn$.

A subset $C$ of $X$ is called \emph{$(\F_1,\F_2)$-$n$-$\delta$-scrambled} if every tuple $(x_1,x_2,\ldots,x_n)\in
C^n\setminus\Delta^{(n)}$ is $(\F_1,\F_2)$-$\delta$-$n$-scrambled for some $\delta>0$.

The system $(X,f)$ is called \emph{$(\F_1,\F_2)$-$\delta$-$n$-chaotic}
if there exists an uncountable $(\F_1,\F_2)$-$\delta$-$n$-scrambled set.

Similarly, we can define $(\F_1,\F_2)$-$n$-scrambled sets  and $(\F_1,\F_2)$-$n$-chaotic systems.
\end{defn}

The following fact is almost immediate consequence of the definitions.
\begin{prop}\label{prop:scram-eq}
A tuple
$(x_1,x_2,\ldots, x_n)\in X^n$ is $n$-scrambled
(resp.  distributionally $n$-scrambled in a sequence $Q$)
if and only if
it is $(\overline{\mathcal M}(0),\overline{\mathcal M}(0))$-$n$-scrambled
(resp.  $(\overline{\mathcal M}_Q(1),\overline{\mathcal M}_Q(1))$-$n$-scrambled).
\end{prop}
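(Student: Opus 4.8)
The plan is to unwind both definitions and match them term by term, the adherence condition corresponding to the ``$\liminf\max$'' / $\Phi^*$ condition and the escaping condition to the ``$\limsup\min$'' / $\Phi$ condition. First I would record what the two families mean concretely: $\M(0)=\M_\N(0)$ is exactly the family of all infinite subsets of $\N$ (the requirement $\overline{d}(P\mid\N)\ge 0$ being vacuous), while $\M_Q(1)$ is the family of sets of upper $Q$-density equal to $1$ (and any such set automatically meets $Q$ infinitely). Hence being an $\M(0)$-attaching point of a set $A$ means $N(x,A)$ is infinite, and being an $\M_Q(1)$-attaching point means $\overline{d}(N(x,A)\mid Q)=1$.

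The technical heart is two elementary metric comparisons on the product space $X^n$, equipped with, say, the max-metric $\rho$. Writing a tuple $z=(z_1,\dots,z_n)$, I would verify
\[\rho(z,\Delta_n)\le \max_{1\le i<j\le n}d(z_i,z_j)\le 2\,\rho(z,\Delta_n)\]
and
\[\tfrac12\min_{1\le i<j\le n}d(z_i,z_j)\le \rho(z,\dn)\le \min_{1\le i<j\le n}d(z_i,z_j),\]
each via a one-line triangle-inequality argument for the lower bounds together with an explicit competitor point in $\Delta_n$ (namely $(z_1,\dots,z_1)$) resp. in $\dn$ (obtained by collapsing the closest pair) for the upper bounds. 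These inequalities say that $\max_{i<j}d(f^k(x_i),f^k(x_j))$ controls, up to a factor $2$, the distance of the iterate $f^{(n)k}(x_1,\dots,x_n)$ to $\Delta_n$, and likewise $\min_{i<j}d(f^k(x_i),f^k(x_j))$ controls its distance to $\dn$.

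With these in hand both equivalences become bookkeeping. For the adherence part I would note that, since both formulations are quantified over all $\ep>0$, the factor $2$ in the first comparison is harmless: the sets $\{k:\max_{i<j}d(f^k(x_i),f^k(x_j))<\ep\}$ and $N((x_1,\dots,x_n),[\Delta_n]_\ep)$ are squeezed between each other after halving $\ep$, so ``infinite for every $\ep$'' (resp. ``upper $Q$-density $1$ for every $\ep$'') holds for one family iff it holds for the other; the latter is precisely $\M(0)$-adherence of $\Delta_n$ (resp. condition~(1), that $\Phi^*(t\mid Q)=1$ for all $t>0$). For the escaping part I would read the ``$n$-scrambled'' (resp. ``distributionally $n$-scrambled'') condition as the existence of a $\delta>0$ with $\limsup_k\min_{i<j}d>\delta$ (resp. $\Phi(\delta\mid Q)=0$, i.e. $\overline{d}(\{k:\min_{i<j}d\ge\delta\}\mid Q)=1$), matching the existential $\delta$ in the definition of a scrambled tuple. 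Using the second comparison together with the inclusions $\{\rho(\cdot,\dn)>\delta'\}\subset X^n\setminus\overline{[\dn]_{\delta'}}\subset\{\rho(\cdot,\dn)\ge\delta'\}$, the escaping set gets sandwiched between sets of the form $\{k:\min_{i<j}d>\cdots\}$, so choosing $\delta'$ as a fixed fraction of $\delta$ (and conversely) transfers ``infinite'' resp. ``upper density $1$'' between the two formulations.

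The step I expect to require the most care is exactly this matching of the threshold parameters on the escaping side: one must track how the factor-$2$ losses in the distance comparison interact with the passage between the open neighbourhood $[\dn]_{\delta'}$, its closure, and the closed set $\{\rho(\cdot,\dn)\le\delta'\}$, so that the existential quantifier on $\delta$ is genuinely preserved in both directions. Everything else is a direct translation of the definitions, which is why the statement is almost immediate.
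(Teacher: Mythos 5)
Your proposal is correct, and it fills in exactly the definition-unwinding that the paper omits (the paper states the proposition without proof, calling it an almost immediate consequence of the definitions). The metric comparisons between $\max_{i<j}d(z_i,z_j)$, $\min_{i<j}d(z_i,z_j)$ and the distances to $\Delta_n$, $\dn$ in the product metric, together with the careful handling of the existential $\delta$ on the escaping side, are precisely the intended bookkeeping.
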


For convenience we restate here a
version of Mycielski's theorem (\cite[Theorem~1]{My64}) which we shall use.

\begin{thm}[Mycielski Theorem]\label{thm:Mycielski-thm}
Let $X$ be a complete second countable metric space without isolated points.
If $R$ is a dense $G_\delta$ subset of $X^n$, then there exists a
dense Mycielski set $K\subset X$ such that $K^n\setminus\dn\subset R$.
\end{thm}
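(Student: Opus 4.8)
The plan is to prove this by the classical Cantor-scheme (fusion) construction, adapted so that the resulting set is simultaneously dense and a countable union of Cantor sets. Since $X$ is perfect, complete and second countable, so is $X^n$; in particular $X^n$ is a Baire space, and we may write $R=\bigcap_{m\ge1}U_m$ with each $U_m$ open and dense in $X^n$ (each $U_m\supset R$ is dense), and after replacing $U_m$ by $U_1\cap\cdots\cap U_m$ we may assume the sequence decreases. Fix a countable base $\set{W_j}_{j\ge1}$ of $X$ by non-empty open sets; as $X$ has no isolated points, each $W_j$ is infinite. The plan is to build, for every $j$, a Cantor set $C_j\subset W_j$ and to put $K=\bigcup_j C_j$. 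Then $K$ is automatically dense (it meets every basic open set) and is a Mycielski set, so the only real content is the independence property $K^n\setminus\dn\subset R$.

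Concretely I would construct non-empty open sets $G(j,s)$ indexed by $j\in\N$ and finite binary strings $s$, forming for each fixed $j$ a Cantor scheme inside $W_j$: the children $G(j,s0),G(j,s1)$ have disjoint closures contained in $G(j,s)$, and $\diam G(j,s)\ra0$ as $|s|\ra\infty$. Completeness then yields Cantor sets $C_j=\bigcap_{\ell}\bigcup_{|s|=\ell}\overline{G(j,s)}$, whose points correspond to branches $\sigma\in\set{0,1}^{\N}$. Call a tuple of nodes $(j_1,s_1),\dots,(j_n,s_n)$ \emph{separated} if the closures $\overline{G(j_i,s_i)}$ are pairwise disjoint. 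The decisive requirement to build into the scheme is that for every separated tuple and every $m$,
\[\overline{G(j_1,s_1)}\times\cdots\times\overline{G(j_n,s_n)}\subset U_m.\]

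This is realized by a level-by-level induction. Having fixed the frontier of length-$\ell$ nodes, I first split each to create length-$(\ell+1)$ children with disjoint closures and halved diameters; then I enumerate the finitely many separated $n$-tuples of length-$(\ell+1)$ nodes and, for each such tuple and each $m\le \ell+1$, shrink the finitely many involved frontier sets so that their closed product lies in $U_m$. This is possible because the product of the currently chosen open sets is a non-empty open subset of $X^n$, so by density of $U_m$ it meets $U_m$ in a non-empty open set containing a sub-product. Since the properties ``separated'' and ``closed product $\subset U_m$'' are both preserved under further shrinking, all conditions secured earlier at this level persist, and only frontier nodes (having no descendants yet) are ever shrunk, so no cascade arises. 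For the verification, take distinct $y_1,\dots,y_n\in K$ with $y_i\in C_{j_i}$ along branch $\sigma_i$; since the enclosing nodes $\overline{G(j_i,\sigma_i|_\ell)}$ shrink to the distinct points $y_i$, for all large $\ell$ this tuple is separated, whence the displayed condition gives $(y_1,\dots,y_n)\in U_m$ for every $m$, i.e. $(y_1,\dots,y_n)\in R$.

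The main obstacle is the coordination at each level: I must check that the finitely many separated-tuple requirements can be met simultaneously by successive shrinking of frontier nodes while keeping every set non-empty (guaranteed precisely by the density of $U_m$) and without destroying sibling-disjointness or the scheme structure. A second delicate point is that, because the $C_j$ share the ambient space, two nodes with different $j$ may have overlapping closures; this is handled cleanly by imposing the condition only on \emph{separated} tuples, together with the observation that any tuple of distinct points is eventually separated and therefore eventually captured. Confirming that the limit sets $C_j$ are genuine Cantor sets (via $\diam\ra0$ and completeness) and that the dovetailing over all levels and all $m$ covers every distinct $n$-tuple is the technical heart; the remaining bookkeeping is a routine Cantor-scheme limit.
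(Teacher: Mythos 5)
The paper does not actually prove this statement: it is quoted verbatim as Theorem~1 of Mycielski's 1964 paper \cite{My64} and used as a black box, so there is no internal proof to compare against. Your proposal supplies the standard self-contained argument (the Kuratowski--Mycielski fusion construction): a Cantor scheme $C_j$ inside each basic open set $W_j$, with the independence condition $\overline{G(j_1,s_1)}\times\cdots\times\overline{G(j_n,s_n)}\subset U_m$ imposed on \emph{separated} tuples of nodes and secured by finitely many successive shrinkings of frontier nodes using density of the $U_m$. The overall architecture is sound, and your two flagged delicate points (overlapping closures across different $j$, handled by restricting to separated tuples plus the ``eventually separated'' observation; preservation of earlier requirements under further shrinking) are exactly the right ones and are resolved correctly.

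One step, as literally written, is false and needs the fix you only allude to: at level $\ell+1$ there are \emph{not} finitely many separated $n$-tuples of nodes, because $j$ ranges over all of $\N$, so the frontier is infinite and your ``enumerate the finitely many separated $n$-tuples and shrink'' loop does not terminate. The standard repair is to introduce the basic sets one at a time --- say, the root $G(j,\emptyset)$ with $\overline{G(j,\emptyset)}\subset W_j$ is created only at stage $j$ --- so that at stage $\ell$ only the schemes for $j\le\ell$ are active and the frontier is genuinely finite; the verification is unaffected, since for any fixed tuple of distinct points and any fixed $m$ one simply waits for a stage $\ell\ge m$ late enough that all the relevant schemes exist and the enclosing nodes are separated. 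You call this ``dovetailing'' and defer it as bookkeeping, which is fair, but it should be stated explicitly since without it the induction step does not make sense. With that amendment the proof is complete and proves exactly the version the paper needs (a single fixed $n$); Mycielski's original theorem handles a countable family of relations in all arities simultaneously, which your construction would also accommodate by folding the extra requirements into the same finite list at each stage.
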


It is still an open problem whether a Cantor scrambled set can be selected when an uncountable scrambled set
exists in the system. The positive answer is only known in the case of $\delta$-scrambled sets (e.g. see \cite{BHS08}).
However in many cases when it is possible to construct an uncountable scrambled set, there also exists a
function measuring separation of scrambled pairs. In most of these cases Cantor scrambled set can also be constructed.
This observation is the main motivation behind the next theorem.

\begin{thm}\label{thm:eta}
Let $(X,f)$ be a TDS and $D$ be an infinite subset of $X$ such that
its closure $Y=\overline{D}$ has no isolated points and $D^n \subset \Prox_n(f)$.
Assume that there is a continuous function $\eta\colon Y^n\to[0,\infty)$ with
$\eta(Y^n\setminus\dn)\subset(0,\infty)$, and
a dense subset $C$ of $Y^n\setminus\dn$ such that for any $n$-tuple $(x_1,\dotsc,x_n)\in C$ we have
\[ \limsup_{m\to \infty} \min_{1\leq i<j\leq n}(f^m(x_i),f^m(x_j))>\eta(x_1,\dotsc,x_n). \]
Then there exists $Q\in\B$ and
a dense Mycielski $(\overline{\mathcal M}_Q(1),\overline{\mathcal M}_Q(1))$-$n$-scrambled subset $K$ of $Y$.
Moreover, each tuple $(x_1,\dotsc,x_n)\in K^n\setminus\dn$ is an
$\overline{\mathcal M}_Q(1)$-$\delta$-escaping point of $\dn$,
where $\delta=\frac 1 2 \eta(x_1,\dotsc,x_n)$.
\end{thm}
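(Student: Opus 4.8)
The plan is to build, for a suitably chosen $Q\in\B$, a single dense $G_\delta$ subset $R\subset Y^n$ all of whose off-diagonal tuples are simultaneously $\M_Q(1)$-adherent to $\Delta_n$ and $\M_Q(1)$-$\tfrac12\eta$-escaping from $\dn$, and then to extract $K$ from $R$ via Mycielski's Theorem~\ref{thm:Mycielski-thm}. First I would assemble the relevant countable families of times. Since $Y$ is perfect and $n\geq2$, the set $\dn$ is nowhere dense in $Y^n$, so $Y^n\setminus\dn$ is dense and a countable dense subset $\{w^{(l)}\}_{l\geq1}\subset C$ (with coordinates $w^{(l)}=(w^{(l)}_1,\dots,w^{(l)}_n)$) is dense in $Y^n$. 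Because $\limsup_m\min_{i<j}d(f^m(w^{(l)}_i),f^m(w^{(l)}_j))>\eta(w^{(l)})$, the set
\[ S_l=\Big\{m\in\N:\min_{1\leq i<j\leq n}d\big(f^m(w^{(l)}_i),f^m(w^{(l)}_j)\big)>\eta(w^{(l)})\Big\} \]
is infinite, hence lies in $\B$. Likewise a countable dense subset of $D$ yields a family $\{p^{(s)}\}\subset D^n$ dense in $Y^n$; as each $p^{(s)}\in\Prox_n(f)$ forces the liminf of the maximal pairwise distance to vanish, every
\[ T_{s,k}=\Big\{m\in\N:\max_{1\leq i<j\leq n}d\big(f^m(p^{(s)}_i),f^m(p^{(s)}_j)\big)<\tfrac1k\Big\} \]
is infinite. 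Applying Lemma~\ref{lem:SEQ} to the countable collection $\{S_l\}\cup\{T_{s,k}\}$ produces $Q=\{q_k\}\in\B$ with $\overline d(\,\cdot\mid Q)=1$ on every member of this collection.

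For adherence I would let $R_{\mathrm{adh}}$ be the set of $\M_Q(1)$-adherent points of $\Delta_n$, which is $G_\delta$ by the first part of Corollary~\ref{cor:G-delta}. Since $\max_{i<j}d(f^m(p^{(s)}_i),f^m(p^{(s)}_j))<\tfrac1k$ forces $f^m(p^{(s)})\in[\Delta_n]_{1/k}$, we have $T_{s,k}\subset N(p^{(s)},[\Delta_n]_{1/k})$, and $\overline d(T_{s,k}\mid Q)=1$ shows each $p^{(s)}$ is $\M_Q(1)$-adherent to $\Delta_n$; thus $R_{\mathrm{adh}}$ contains the dense family $\{p^{(s)}\}$ and is a dense $G_\delta$ set.

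For escaping I would use the elementary sup-metric estimate $d(z,\dn)\geq\tfrac12\min_{i<j}d(z_i,z_j)$, which is precisely what produces the factor $\tfrac12$ in the statement. Setting
\[ h_m(z)=\frac1m\,\#\Big\{1\leq k\leq m:\min_{1\leq i<j\leq n}d\big(f^{q_k}(z_i),f^{q_k}(z_j)\big)>\eta(z)\Big\},\qquad R_{\mathrm{esc}}=\big\{z\in Y^n:\textstyle\limsup_{m}h_m(z)\geq1\big\}, \]
each $h_m$ is a normalized finite sum of indicators of open sets (the functions $z\mapsto\min_{i<j}d(f^{q_k}(z_i),f^{q_k}(z_j))-\eta(z)$ are continuous), hence lower semi-continuous, so $\{h_m>1-\tfrac1r\}$ is open and $R_{\mathrm{esc}}=\bigcap_r\bigcap_M\bigcup_{m\geq M}\{h_m>1-\tfrac1r\}$ is $G_\delta$. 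For $z\in R_{\mathrm{esc}}$ the times with $\min_{i<j}d(f^m(z_i),f^m(z_j))>\eta(z)$ have $Q$-density $1$, and for each such $m$ the estimate above gives $d(f^m(z),\dn)>\tfrac12\eta(z)$, so $f^m(z)\in X^n\setminus\overline{[\dn]_{\eta(z)/2}}$; thus $z$ is an $\M_Q(1)$-$\tfrac12\eta(z)$-escaping point of $\dn$. Since $S_l$ is exactly the time set appearing in $h_m(w^{(l)})$ and $\overline d(S_l\mid Q)=1$, each $w^{(l)}\in R_{\mathrm{esc}}$, so $R_{\mathrm{esc}}$ is a dense $G_\delta$ set as well.

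Finally I would take $R=R_{\mathrm{adh}}\cap R_{\mathrm{esc}}$, a dense $G_\delta$ subset of $Y^n$, and apply Theorem~\ref{thm:Mycielski-thm} (legitimate because $Y$ is compact, hence complete and second countable, metric and without isolated points) to obtain a dense Mycielski set $K\subset Y$ with $K^n\setminus\dn\subset R$. Then every $z\in K^n\setminus\dn$ is $\M_Q(1)$-adherent to $\Delta_n$ and $\M_Q(1)$-$\tfrac12\eta(z)$-escaping from $\dn$ with $\tfrac12\eta(z)>0$, which is exactly the assertion together with the \emph{moreover} clause. The step I expect to be most delicate is the escaping part: whereas adherence is quantified over all $\ep$ and is handled directly by Corollary~\ref{cor:G-delta}, $\delta$-escaping is defined at a single threshold, and here that threshold $\tfrac12\eta(z)$ varies with $z$; the semi-continuity computation is what simultaneously keeps $R_{\mathrm{esc}}$ genuinely $G_\delta$ and delivers the \emph{exact} threshold $\tfrac12\eta(z)$, and one must verify that the geometric constant in $d(z,\dn)\geq\tfrac12\min_{i<j}d(z_i,z_j)$ indeed matches the $\tfrac12$ claimed (this presumes the sup-metric on $Y^n$).
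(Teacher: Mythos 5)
Your proposal is correct and follows essentially the same route as the paper: restrict to countable dense families in $C$ and $D^n$, use Lemma~\ref{lem:SEQ} to choose $Q$ with upper density one along all the relevant time sets, obtain the adherence part as a dense $G_\delta$ via Corollary~\ref{cor:G-delta}, obtain the escaping part as a dense $G_\delta$ via a semicontinuity argument built from open conditions involving $\eta$ (your $h_m$ plays the role of the paper's sets $A_m$), and finish with Mycielski's theorem; the $\tfrac12$ factor arises from the same metric estimate $d(z,\dn)\geq\tfrac12\min_{i<j}d(z_i,z_j)$ that the paper leaves implicit. No gaps.
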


\begin{proof}
Since $X$ is separable, without loss of generality, taking subsets if necessary,
we may assume that both $C$ and $D$ are countable, while still $\overline{D}=Y$ and $\overline{C}=Y^n$.
For every $(x_1,\dotsc,x_n)\in C$, there exists $S_{(x_1,\dotsc,x_n)}=\{S_{(x_1,\dotsc,x_n)}(k)\}_{k=1}^\infty\in\B$
such that
\[
\lim_{k\to\infty}\min_{1\leq i<j\leq n}d(f^{S_{(x_1,\dotsc,x_n)}(k)}(x_i), f^{S_{(x_1,\dotsc,x_n)}(k)}(x_j))
>\eta(x_1,\dotsc,x_n).
\]
Similarly, for every $(y_1,\dotsc,y_n)\in D^n$,
there exists $P_{(y_1,\dotsc,y_n)}=\{P_{(y_1,\dotsc,y_n)}(k)\}_{k=1}^\infty\in\B$ such that
\[
\lim_{k\to\infty}\max_{1\leq i<j\leq n}d(f^{P_{(y_1,\dotsc,y_n)}(k)}(y_i), f^{P_{(y_1,\dotsc,y_n)}(k)}(y_j))=0.
\]
Then by Lemma \ref{lem:SEQ} we can find $Q=\{q_k\}_{k=1}^\infty\in\B$ such that
$$\overline{d}(P_{(y_1,\ldots, y_n)}|Q)=\overline{d}(S_{(x_1,\ldots, x_n)}|Q)=1$$
for every $(y_1,\dotsc,y_n)\in D^n$ and $(x_1,\dotsc,x_n)\in C$.

For every $m\in\N$, define
\begin{align*}
  A_m=&\Bigg\{ (x_1,\dotsc,x_n)\in Y^n\colon
  \#\bigg(\Big\{1\leq k\leq N\colon \min_{1\leq i<j\leq n}d(f^{q_k}(x_i),f^{q_k}(x_j))\\
  & \qquad >\eta(x_1,\dotsc,x_n)+\frac 1 M \Big\}\bigg)
  \geq N\Big(1-\frac 1m\Big) \textrm{ for some } N,M\geq m\Bigg\}.
\end{align*}
Observe that by the continuity of $f$ and $\eta$ each $A_m$ is open in $Y^n$.
By the choice of $Q$, we have $C\subset A_m$ and thus $A_m$ is open and dense in $Y^n$.
Therefore $A=\bigcap_{m=1}^\infty A_m$ is a residual subset of $Y^n$.
Moreover, by the construction, every tuple  $(x_1,\dotsc,x_n)\in A$ is  a
$\overline{\mathcal M}_Q(1)$-$\delta$-escaping point of $\dn$,
where $\delta=\frac 1 2 \eta(x_1,\dotsc,x_n)$.

Similarly, if we denote by $R\subset Y^n$ the set of all $\overline{\mathcal M}_Q(1)$-adherent points of $\Delta_n$ then
$D^n\setminus\dn\subset R$ which by Corollary~\ref{cor:G-delta} implies that $R$ is residual in $Y^n$.
Then by Theorem~\ref{thm:Mycielski-thm} there exists a dense
Mycielski subset $K$ of $Y$ such that $K^n\setminus\dn\subset A\cap R$.
Clearly, $K$ is $(\overline{\mathcal M}_Q(1),\overline{\mathcal M}_Q(1))$-$n$-scrambled. This ends the proof.
\end{proof}

\begin{rem}
There are a few standard situations when the existence of scrambled sets can be proved.
In these situations function $\eta$ and sets $C,D$ can be easily defined, which show the utility of Theorem~\ref{thm:eta}
as a unified tool for proving chaos in the sense of Li and Yorke, as well as existence of Cantor scrambled sets.
\end{rem}

\begin{exmp} Here we present a few situations when Theorem~\ref{thm:eta} can be applied.
\begin{enumerate}[1.]
\item If $D$ is an uncountable $\delta$-$n$-scrambled set,
then the closure $\overline{D}$ is the union of a perfect set and an at most countable set,
so without loss of generality we may assume that $\overline{D}$ is perfect.
Then it is enough to put $C=D$ and $\eta\equiv \delta$ to apply Theorem \ref{thm:eta}. Therefore
there exists $Q\in\B$ and a dense Mycielski
$(\overline{\mathcal M}_Q(1),\overline{\mathcal M}_Q(1))$-$n$-$\frac{\delta}{2}$-scrambled subset
$K$ of $\overline{D}$.
In particular, if $D$ is dense in $X$ then $K$ is also dense in $X$.
This generalizes results of \cite{O10} and \cite{LT10}.

\item  If $(X,f)$ is weakly mixing, then it is easy to verify that $\overline{\Tran(f^{(n)})}=X^n$.
If $\#(X)>1$ then $X$ is perfect. If we put $\eta\equiv \ep$,
where $\ep=\frac{1}{2} \max\{ \min_{1\leq i<j\leq n}d(x_i,x_j):\,(x_1,\dotsc,x_n)\in X^n\}$,
select any countable subset $D\subset \Tran(f^{(n)})$ with $\overline{D}=X^n$ and put $C=D$
then assumptions of Theorem~\ref{thm:eta} are satisfied.
Thus there exists $Q\in\B$ and a dense Mycielski
$(\overline{\mathcal M}_Q(1),\overline{\mathcal M}_Q(1))$-$n$-$\ep$-scrambled subset of $X$.
The existence of scrambled sets for weakly mixing systems was proved independently in \cite{Iwanik}
and \cite{XY91}.

\item
If $(X,f)$ is transitive with a fixed point, then it is not hard to verify that $\overline{\Prox_n(f)}=\overline{\Rec_n(f)}=X^n$.
Again, if $\#(X)>1$, then $X$ is perfect.
So it is enough to put $\eta(x_1,\ldots, x_n)=\min_{i\neq j} d(x_i,x_j)$,
and select any $C\subset \Prox_n(f)$ and $D\subset \Rec_n(f)$ with both $C$ and $D$ dense in $X^n$.
Then by Theorem~\ref{thm:eta},
there exists $Q\in\B$ and a dense Mycielski
$(\overline{\mathcal M}_Q(1),\overline{\mathcal M}_Q(1))$-$n$-scrambled subset of $X$.
The existence of scrambled sets in this context was first proved in \cite{HY02} and extended in \cite{X05}.
\end{enumerate}
\end{exmp}

Consider the set $\A=\{0, 1, \dots,n-1\}$, $n\geq 2$ endowed with discrete topology and
let $\Sigma_n^+=\A^{\N_0}$ be the product of infinitely many copies of
$\A$ with the product topology.
The shift transformation is a continuous map $\sigma\colon \Sigma_n^+ \to \Sigma_n^+$ given by
$\sigma(x)_i = x_{i+1}$.
The dynamical system $(\Sigma_n^+,\sigma)$ is called the (one-sided) \emph{full shift (on $n$ symbols)}.
Any closed subset $X\subset \Sigma_n^+$
invariant for $\sigma$ is called a \emph{subshift} of $\Sigma_n^+$.
We can also consider the \emph{two-sided full shift} $\Sigma_n=\A^{\mathbb{Z}}$,
which again is a metrizable compact space with the continuous shift
 transformation $\sigma$ defined by the same formula as before (but now $i\in \Z$).

\begin{thm}\label{thm:exact}
Let $(X,f)$ be a topologically exact TDS with $\#(X)>1$.
Then for every $n\geq 2$, there exists $\delta>0$ such that for every $Q,P\in\B$ there exists
a dense Mycielski $(\overline{\mathcal M}_Q(1),\overline{\mathcal M}_P(1))$-$n$-$\delta$-scrambled set.
\end{thm}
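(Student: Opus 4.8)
The plan is to follow the residual‑set strategy of the proof of Theorem~\ref{thm:eta}, but to split the two requirements apart so that the prescribed sequences $Q$ and $P$ can be handled independently. First I would fix $\delta$ once and for all: since $(X,f)$ is exact and $\#(X)>1$, no point is isolated (an isolated point would spread to all of $X$, forcing $\#(X)=1$), so $X$ is perfect and infinite, and I may choose $n$ points pairwise at distance more than $2\delta$ for a suitable $\delta>0$ depending only on $(X,f)$ and $n$. This is the separation constant in the conclusion, chosen before $Q,P$ are given, as the quantifier order demands. Writing $g=f^{(n)}$ and letting $E=\{z\in X^n:\min_{i\neq j}d(z_i,z_j)>\delta\}$ be the open separated region, the two families enter only through $R_1=\{z\in X^n: z\text{ is an }\M_Q(1)\text{-adherent point of }\Delta_n\}$ and $R_2=\{z\in X^n: z\text{ is an }\M_P(1)\text{-}\delta\text{-escaping point of }\dn\}$. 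By Corollary~\ref{cor:G-delta} both are $G_\delta$, so once each is shown to be dense, $R_1\cap R_2$ is a dense $G_\delta$ set and Theorem~\ref{thm:Mycielski-thm} yields a dense Mycielski set $K\subset X$ with $K^n\setminus\dn\subset R_1\cap R_2$; every tuple of $K^n\setminus\dn$ is then simultaneously $\M_Q(1)$-adherent to $\Delta_n$ and $\M_P(1)$-$\delta$-escaping from $\dn$. Crucially, this simultaneity is automatic from Baire category, so I only have to verify the two densities separately.

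Density of $R_1$ is the routine half. Given a box $U_1\times\dots\times U_n$, I would use exactness to steer all $n$ coordinates into a common small ball $B(c,\gamma_\ell)$ at the start of a block of length $L_\ell$; by uniform continuity of $f,\dots,f^{L_\ell}$ the whole block stays $\varepsilon_\ell$-close to the diagonal once $\gamma_\ell$ is small enough. Running this along blocks with $\varepsilon_\ell\to 0$ and $L_\ell\to\infty$, placed so as to exhaust longer and longer initial segments of $Q$, gives for every fixed $\varepsilon$ an $\varepsilon$-near-diagonal return set of upper $Q$-density $1$. The routing between consecutive blocks is the standard nested-preimage construction built on exactness, exactly as the sequences are produced in the proof of Theorem~\ref{thm:eta}.

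Density of $R_2$ is the substantial half. The scheduling mirrors the adherence case: from any box I want a tuple whose orbit is $\delta$-separated on blocks of growing length $L_\ell$, arranged so that the separated times have upper $P$-density $1$ (choose each block to cover the $P$-positions of indices $m_{\ell-1}+1,\dots,m_\ell$ with $m_\ell\gg m_{\ell-1}$, so the $P$-fraction of separated times in the $m_\ell$-th window tends to $1$). The input that makes such blocks available is the key lemma: there is $\delta>0$ such that for every $L$ some $n$-tuple has its $g$-orbit $\delta$-separated for $L$ consecutive steps. Granting this, I route by exactness from the box to a $\gamma_\ell$-neighbourhood of the start of such a segment, and uniform continuity lets the genuine orbit shadow it, keeping all pairwise distances above $\delta$ throughout the block; between blocks I route freely using exactness.

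The main obstacle is exactly this key lemma, and it is genuinely harder than the adherence estimate, because separation—unlike approximate coincidence—is preserved neither by continuity nor by the crude use of exactness: the naive nested-preimage construction only forces the coordinates apart at a sparse set of widely spaced times, whereas upper $P$-density $1$ along an arbitrary, possibly syndetic, $P$ demands arbitrarily long runs of \emph{consecutive} separated times. A compactness and nesting argument shows that having $\delta$-separated segments of every length is equivalent to having a single tuple whose orbit stays $\delta$-separated forever, that is, to the existence of a compact $g$-invariant set contained in $E$, equivalently an off-diagonal minimal set of $g=f^{(n)}$. Thus the heart of the proof is to show that an exact system with $\#(X)>1$ is not proximal, indeed not $n$-proximal: it must contain $n$ points whose orbits remain pairwise bounded away from one another. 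I would extract this from the spreading forced by exactness (locally eventually onto, hence topologically mixing of all orders with rich invariant structure), isolating a closed $g$-invariant subset of $E$ disjoint from $\dn$. This is the step I expect to require the most care, and it is the one place where the full strength of exactness, rather than weak mixing alone, is indispensable.
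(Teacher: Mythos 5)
Your overall architecture --- two dense $G_\delta$ subsets of $X^n$ (adherence to $\Delta_n$ along $Q$, $\delta$-escaping from $\dn$ along $P$), intersected and fed into Mycielski's theorem --- is exactly the paper's. Your adherence half is sound, though more complicated than necessary: by exactness one can pick $x_i\in U_i$ and $r>0$ with $f^r(x_i)=z$ for all $i$ simultaneously, so the tuple lands \emph{exactly} on the diagonal after finitely many steps and is an $\M_Q(1)$-adherent point of $\Delta_n$ for every $Q$ trivially; no block scheduling or near-diagonal shadowing is needed.

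The genuine gap is the ``key lemma'' you correctly isolate and then only gesture at: the existence of a compact $f^{(n)}$-invariant set inside the $\delta$-separated region, equivalently of $n$ points whose orbits stay pairwise bounded away from each other forever. Saying you ``would extract this from the spreading forced by exactness'' is not an argument, and this step carries essentially all of the content of the theorem. The paper's route is the standard horseshoe construction: take two disjoint closed sets with nonempty interior; exactness gives $r>0$ with $f^r$ mapping each onto $X$; the nested itinerary sets produce an $f^r$-invariant compact $\Lambda$ such that $f^r|_\Lambda$ is an extension of the full shift $\Sigma_2^+$. Since $\Sigma_2^+$ contains uncountably many pairwise disjoint minimal subsystems, so does $(\Lambda,f^r)$, and hence $(X,f)$ admits $n$ pairwise disjoint minimal sets $A_1,\dots,A_n$; one then sets $\delta=\frac12\min_{i<j}d(A_i,A_j)$, which fixes the separation constant before $Q,P$ are given, as the quantifiers require. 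Once these sets are available, your block-scheduling for the escaping half also becomes unnecessary: each $B_j=\bigcup_{i\geq 1}f^{-i}(A_j)$ is dense by exactness, and any tuple in $B_1\times\cdots\times B_n$ eventually enters the invariant product $A_1\times\cdots\times A_n$ and remains $2\delta$-separated from then on, so it is an $\M_P(1)$-$\delta$-escaping point of $\dn$ for \emph{every} $P$ simply because the relevant visit set is cofinite. In short: right skeleton, but the one step you flag as ``requiring the most care'' is the theorem's actual substance and is left unproved.
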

\begin{proof}
It is easy to see that there is $r>0$ and an invariant subset $\Lambda$ for $f^r$
such that $f^r|_\Lambda$ is an extension of $\Sigma_2^+$.
It is well known that $\Sigma_2^+$ contains an uncountable family of disjoint minimal subsystems,
therefore $\Lambda$ also contains uncountable family of minimal systems of $f^r$.
In particular, there are $n$-pairwise disjoint minimal subsets $A_1,A_2,\dots, A_n$ for $f$.
Denote  $B_j=\bigcup_{i=1}^\infty f^{-i}(A_j)$ and
observe that $B_j$ is dense in $X$ for $j=1,2,\ldots,n$.
If we put $\delta=\frac{1}{2}\min_{1\leq i<j\leq n}d(A_i,A_j)$
then for every $P\in \B$ taking any tuple in $B_1\times B_2\times\cdots\times B_n$ we always obtain an
$\overline{\mathcal M}_P(1)$-$\delta$-escaping point of $\dn$.
Thus the set of such tuples is residual in $X^n$ by Corollary~\ref{cor:G-delta}.

Similarly, if we fix $n$ non-empty open subsets $U_1,\ldots, U_n$ of $X$ and a point $z\in X$
then there are $x_i\in U_i$ for $i=1,2,\dotsc,n$ and $r>0$
such that $f^r(x_i)=z$ for $i=1,2,\ldots,n$.
In particular the tuple $(x_1,\ldots, x_n)$ is an $\overline{\mathcal M}_Q(1)$-adherent point of $\Delta_n$
for any $Q\in \B$.
Then as before, the set of $\overline{\mathcal M}_Q(1)$-adherent points of $\Delta_n$ is residual in $X^n$.

By the definition of $(\F_1,\F_2)$-scrambled tuple we immediately see that
the set of $(\overline{\mathcal M}_Q(1),\overline{\mathcal M}_P(1))$-$n$-$\delta$-scrambled tuples
is residual in $X^n$. Hence by  Mycielski Theorem
there is a dense Mycielski $(\overline{\mathcal M}_Q(1),\overline{\mathcal M}_P(1))$-$n$-$\delta$-scrambled set.
\end{proof}

The following proposition easily follows from Lemma \ref{lem:f-and-f-r} and the definitions.
\begin{prop}\label{prop:QP-n-r}
Let $(X,f)$ be a TDS, $Q,P\in\B$, $a,b\in[0,1]$, $\delta>0$, $n\geq 2$ and $r\in\N$.
\begin{enumerate}
\item There exists $\delta'>0$ such that if $(x_1,x_2,\dotsc,x_n)\in X^n$ is
$(\overline{\mathcal M}_Q(a),\overline{\mathcal M}_P(b))$-$n$-$\delta$-scrambled for $f$,
then $(x_1,x_2,\dotsc,x_n)$ is
$(\overline{\mathcal M}_{Q'}(a),\overline{\mathcal M}_{P'}(b))$-$n$-$\delta'$-scrambled for $f^r$,
where $Q'=\phi_r(Q)$ and $P'=\varphi_r(P)$.
\item There exists $\delta'>0$ such that  if $(x_1,x_2,\dotsc,x_n)\in X^n$ is
$(\overline{\mathcal M}_Q(a),\overline{\mathcal M}_P(b))$-$n$-$\delta$-scrambled for $f^r$,
then $(x_1,x_2,\dotsc,x_n)$ is
$(\overline{\mathcal M}_{Q'}(a),\overline{\mathcal M}_{P'}(b))$-$n$-$\delta'$-scrambled for $f$,
where $Q'=\varphi_r^{-1}(Q)$ and $P'=\phi_r^{-1}(P)$.
\end{enumerate}
\end{prop}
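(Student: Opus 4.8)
The plan is to recognize that every scrambling condition appearing in the statement is really a statement about the product system $(X^n, f^{(n)})$ together with the two closed invariant sets $\Delta_n$ and $\dn$, and then to feed this system into Lemma~\ref{lem:f-and-f-r}. Indeed, a tuple being $(\overline{\mathcal M}_Q(a),\overline{\mathcal M}_P(b))$-$n$-$\delta$-scrambled for $f$ means exactly that $(x_1,\dotsc,x_n)$, viewed as a single point of $X^n$, is an $\overline{\mathcal M}_Q(a)$-adherent point of $\Delta_n$ and an $\overline{\mathcal M}_P(b)$-$\delta$-escaping point of $\dn$ for the map $f^{(n)}$. Since $X^n$ is a compact metric space and $f^{(n)}$ is continuous, $(X^n,f^{(n)})$ is a TDS, and both $\Delta_n$ and $\dn$ are closed and $f^{(n)}$-invariant, so Lemma~\ref{lem:f-and-f-r} applies with $X$ replaced by $X^n$, $f$ replaced by $f^{(n)}$, and $A$ replaced by either $\Delta_n$ or $\dn$. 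The second ingredient is the identity $(f^{(n)})^r=(f^r)^{(n)}$, which says that iterating the product map $r$ times is the same as taking the $n$-fold product of $f^r$; thus ``scrambled for $f^r$'' in the statement is precisely a scrambling statement for the map $(f^{(n)})^r$, and Lemma~\ref{lem:f-and-f-r} is exactly designed to pass between $f^{(n)}$ and its $r$-th iterate.

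With these identifications, part (1) follows by combining two pieces of Lemma~\ref{lem:f-and-f-r} applied to $f^{(n)}$. For the adherence condition I would invoke Lemma~\ref{lem:f-and-f-r}\eqref{lem:f-and-f-r:c1} with $A=\Delta_n$, which turns an $\overline{\mathcal M}_Q(a)$-adherent point for $f^{(n)}$ into an $\overline{\mathcal M}_{\phi_r(Q)}(a)$-adherent point for $(f^{(n)})^r$; this gives exactly $Q'=\phi_r(Q)$. For the escaping condition I would invoke Lemma~\ref{lem:f-and-f-r}\eqref{lem:f-and-f-r:c3} with $A=\dn$, which turns an $\overline{\mathcal M}_P(b)$-$\delta$-escaping point for $f^{(n)}$ into an $\overline{\mathcal M}_{\varphi_r(P)}(b)$-$\delta'$-escaping point for $(f^{(n)})^r$; this gives $P'=\varphi_r(P)$ together with a threshold $\delta'=\delta'(f^{(n)},\delta,\dn)>0$. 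Crucially, the $\delta'$ produced by the lemma depends only on the data $f^{(n)},\delta,\dn$ (equivalently on $f,\delta,n,r$) and not on the particular tuple, so the same $\delta'$ works for every scrambled tuple, as required by the quantifier ``there exists $\delta'$''.

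Part (2) is entirely symmetric, using the other two cases of the lemma. Here I would apply Lemma~\ref{lem:f-and-f-r}\eqref{lem:f-and-f-r:c2} with $A=\Delta_n$ to pass the adherence condition from $(f^{(n)})^r$ down to $f^{(n)}$, yielding $Q'=\varphi_r^{-1}(Q)$, and Lemma~\ref{lem:f-and-f-r}\eqref{lem:f-and-f-r:c4} with $A=\dn$ to pass the escaping condition down, yielding $P'=\phi_r^{-1}(P)$ and again a uniform $\delta'>0$. I expect no genuine obstacle in this proof; the only points demanding care are bookkeeping in nature: checking that the family indices $\phi_r,\varphi_r,\varphi_r^{-1},\phi_r^{-1}$ are matched to the correct pieces of Lemma~\ref{lem:f-and-f-r} (adherence versus escaping, and the forward versus backward direction), and confirming that $\Delta_n$ and $\dn$ are genuinely closed and $f^{(n)}$-invariant so that the hypotheses of the lemma are met. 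This is the sense in which the proposition ``easily follows'' from Lemma~\ref{lem:f-and-f-r} and the definitions.
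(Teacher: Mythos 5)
Your proposal is correct and is precisely the argument the paper intends: the paper proves this proposition by simply noting that it "easily follows from Lemma~\ref{lem:f-and-f-r} and the definitions," and your write-up supplies exactly the missing details — viewing the tuple as a point of the product system $(X^n,f^{(n)})$, checking that $\Delta_n$ and $\dn$ are closed and invariant, using $(f^{(n)})^r=(f^r)^{(n)}$, and matching parts \eqref{lem:f-and-f-r:c1}--\eqref{lem:f-and-f-r:c4} of the lemma to the adherence and escaping conditions with the correct index maps $\phi_r,\varphi_r,\varphi_r^{-1},\phi_r^{-1}$. The observation that the lemma's $\delta'$ depends only on $(f,\delta,A)$ and not on the tuple, so that a single $\delta'$ works uniformly, is also the right point to emphasize.
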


\begin{rem}
For every $A\in\B$, adding or removing from $A$ a finite number of elements does not affect its density.
Therefore in Proposition \ref{prop:QP-n-r} for $Q=P=\N$ we can also have $Q'=P'=\N$.
\end{rem}

\begin{cor}
Let $(X,f)$ be a TDS, $a,b\in[0,1]$, $n\geq 2$ and $r\in\N$.
Then $(X,f)$ is $(\overline{\mathcal M}(a),\overline{\mathcal M}(b))$-$n$-$\delta$-chaotic for some $\delta>0$
if and only if $(X,f^r)$ is $(\overline{\mathcal M}(a),\overline{\mathcal M}(b))$-$n$-$\delta'$-chaotic for some $\delta'>0$
\end{cor}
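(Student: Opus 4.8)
The plan is to read this corollary as a direct packaging of Proposition~\ref{prop:QP-n-r} at the level of whole systems, specialized to the case $Q=P=\N$. The crucial observation is that $(X,f)$ and $(X,f^r)$ are carried by the \emph{same} underlying space $X$, so a scrambled set need not be transported: the identical uncountable set that witnesses chaos on one side will serve as the candidate witness on the other side. All that must be checked is that the scrambling property of each individual tuple survives the passage $f\leftrightarrow f^r$, and this is exactly what Proposition~\ref{prop:QP-n-r} delivers, once we use the remark following it to guarantee that for $Q=P=\N$ we may take $Q'=P'=\N$ as well.

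First I would treat the forward implication. Assuming $(X,f)$ is $(\overline{\mathcal M}(a),\overline{\mathcal M}(b))$-$n$-$\delta$-chaotic for some $\delta>0$, I would fix an uncountable $(\overline{\mathcal M}(a),\overline{\mathcal M}(b))$-$n$-$\delta$-scrambled set $K\subset X$. By the definition of a scrambled set, every tuple $(x_1,\ldots,x_n)\in K^n\setminus\dn$ is $(\overline{\mathcal M}(a),\overline{\mathcal M}(b))$-$n$-$\delta_0$-scrambled for $f$ for some $\delta_0>0$. I would then apply Proposition~\ref{prop:QP-n-r}(1), combined with the remark following it (so that $Q'=P'=\N$), to conclude that this same tuple is $(\overline{\mathcal M}(a),\overline{\mathcal M}(b))$-$n$-$\delta_0'$-scrambled for $f^r$ for a suitable $\delta_0'>0$. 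Since this holds for every tuple of $K^n\setminus\dn$, the very same uncountable set $K$ is $(\overline{\mathcal M}(a),\overline{\mathcal M}(b))$-$n$-$\delta'$-scrambled for $f^r$, so $(X,f^r)$ is chaotic. The reverse implication is perfectly symmetric: starting from an uncountable scrambled set for $f^r$, I would invoke Proposition~\ref{prop:QP-n-r}(2) (again with the remark to keep $Q'=P'=\N$) tuple by tuple, obtaining that the same set is scrambled for $f$.

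The only delicate point—and hence the place to be explicit rather than the genuine difficulty—is the quantifier structure of $\delta$. The definition of a $(\overline{\mathcal M}(a),\overline{\mathcal M}(b))$-$n$-$\delta$-scrambled set already allows the escaping scale to depend on the tuple, and the notion of being chaotic asks only for the existence of some positive $\delta$. Proposition~\ref{prop:QP-n-r} produces, for each fixed starting scale, a new positive scale after changing the power, so no uniformity across tuples is required or lost: each tuple retains a positive escaping scale, which is all the definition demands. Consequently the entire substantive content is already contained in Lemma~\ref{lem:f-and-f-r} and Proposition~\ref{prop:QP-n-r}, and this corollary merely records their consequence at the level of chaotic systems; I do not anticipate any real obstacle beyond keeping the per-tuple $\delta$ bookkeeping honest.
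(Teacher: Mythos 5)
Your argument is correct and is essentially the same as the paper's: the corollary is stated there as an immediate consequence of Proposition~\ref{prop:QP-n-r} together with the remark that for $Q=P=\N$ one may take $Q'=P'=\N$, applied tuple by tuple to the same uncountable set. Your explicit handling of the per-tuple $\delta$ bookkeeping is a reasonable clarification but introduces nothing beyond what the paper leaves implicit.
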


Note that there are numerous examples of $P,Q\in \B$ that cannot be obtained as inverse images
$Q=\varphi_r^{-1}(Q')$ or $P=\phi_r^{-1}(P')$ for any $P',Q'\in \B$.
However, we can still prove the following theorem.

\begin{thm}\label{cor:f-and-fr}
Fix any two integers $n,r\geq 2$. The following conditions are equivalent:
\begin{enumerate}
\item\label{cor:f-and-fr:c1} There is $\delta>0$ such that for every $Q,P\in\B$, there exists
a dense Mycielski $(\overline{\mathcal M}_Q(1),\overline{\mathcal M}_P(1))$-$n$-$\delta$-scrambled set
for $f$.

\item\label{cor:f-and-fr:c2}  There is $\delta>0$ such that for every $Q,P\in\B$, there exists
a dense Mycielski $(\overline{\mathcal M}_Q(1),\overline{\mathcal M}_P(1))$-$n$-$\delta$-scrambled set
for $f^r$.
\end{enumerate}
\end{thm}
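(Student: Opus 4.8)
\section*{Proof proposal}

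The plan is to prove the two implications separately, in each case keeping the \emph{same} underlying set $K$ and only reinterpreting its scrambledness for $f$ versus $f^r$. Since the metric and Mycielski structure of $K$ are insensitive to which power we use, it suffices to transfer the adherence and escaping properties tuple by tuple. Throughout I work inside the product system $(X^n,f^{(n)})$, applying the density estimates to the map $g=f^{(n)}$, its power $g^r=(f^r)^{(n)}$, and the $g$-invariant closed sets $\Delta_n$ (for the adherence condition) and $\dn$ (for the escaping condition).

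For the implication \eqref{cor:f-and-fr:c1}$\Rightarrow$\eqref{cor:f-and-fr:c2} I would start from arbitrary $Q,P\in\B$ intended as sequences for $f^r$ and \emph{realize} them as images: choose $\tilde Q,\tilde P\in\B$ with $\phi_r(\tilde Q)=Q$ and $\varphi_r(\tilde P)=P$, which is possible since each fiber of $\phi_r$ and of $\varphi_r$ over a positive integer is non-empty, so one simply picks a single preimage of each element. Condition \eqref{cor:f-and-fr:c1} supplies a uniform $\delta>0$ and a dense Mycielski $(\overline{\mathcal M}_{\tilde Q}(1),\overline{\mathcal M}_{\tilde P}(1))$-$n$-$\delta$-scrambled set $K$ for $f$, and then Proposition~\ref{prop:QP-n-r}(1), whose $\delta'$ depends only on $f,r,\delta$ and not on the sequences, shows that the same $K$ is $(\overline{\mathcal M}_{Q}(1),\overline{\mathcal M}_{P}(1))$-$n$-$\delta'$-scrambled for $f^r$. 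This settles one direction with no further work.

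The substance is the reverse implication \eqref{cor:f-and-fr:c2}$\Rightarrow$\eqref{cor:f-and-fr:c1}, where the difficulty flagged before the theorem appears: a prescribed $f$-sequence $Q$ need not be a preimage $\varphi_r^{-1}(\hat Q)$, so Proposition~\ref{prop:QP-n-r}(2) cannot be applied directly. My workaround is to pass instead to the \emph{images} $\hat Q=\varphi_r(Q)$ and $\hat P=\phi_r(P)$ (discarding the value $0$ from $\hat Q$ if it occurs; both remain infinite because $\varphi_r,\phi_r$ are $r$-to-$1$), invoke \eqref{cor:f-and-fr:c2} to obtain a dense Mycielski $(\overline{\mathcal M}_{\hat Q}(1),\overline{\mathcal M}_{\hat P}(1))$-$n$-$\delta$-scrambled set $K$ for $f^r$, and then prove a statement sharper than Lemma~\ref{lem:f-and-f-r}(2),(4): that $g^r$-adherence to $\Delta_n$ along $\varphi_r(Q)$ already forces $g$-adherence along the original $Q$, and that $g^r$-$\delta$-escaping from $\dn$ along $\phi_r(P)$ forces $g$-$\delta'$-escaping along $P$ for some uniform $\delta'>0$.

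The heart of the matter, and the step I expect to be the main obstacle, is exactly this density transfer. Fixing $\varepsilon$, let $\varepsilon''$ be the uniform-continuity threshold so that $g^{rk}(z)\in[\Delta_n]_{\varepsilon''}$ forces $g^{rk+i}(z)\in[\Delta_n]_{\varepsilon}$ for $i=0,\dots,r-1$; then every good index $k\in M:=\{k:g^{rk}(z)\in[\Delta_n]_{\varepsilon''}\}$ drags its entire $\varphi_r$-fiber into $N(z,[\Delta_n]_{\varepsilon},g)$. Because $\varphi_r$ is non-decreasing, the fibers $Q_k=\{q\in Q:\varphi_r(q)=k\}$ sit in $Q$ as consecutive blocks of size between $1$ and $r$, so among the first $T_m=\sum_{k\le m}\#Q_k$ elements of $Q$ at least $\ell_m:=\#(M\cap\{\hat q_1,\dots,\hat q_m\})$ lie in $N(z,[\Delta_n]_{\varepsilon},g)$, while the non-$M$ blocks contribute at most $r(m-\ell_m)$ elements in total. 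Hence the relative density is at least $\ell_m/\bigl(\ell_m+r(m-\ell_m)\bigr)$, which tends to $1$ along the subsequence on which $\ell_m/m\to 1$; as upper density never exceeds $1$, it equals $1$. The escaping side is handled identically with $\phi_r$ in place of $\varphi_r$, the backward continuity estimate of Lemma~\ref{lem:f-and-f-r} producing the uniform $\delta'$. Applying both transfers to every tuple of $K^n\setminus\dn$ then shows that $K$ is a dense Mycielski $(\overline{\mathcal M}_{Q}(1),\overline{\mathcal M}_{P}(1))$-$n$-$\delta'$-scrambled set for $f$ with $\delta'$ independent of $Q,P$, which is precisely \eqref{cor:f-and-fr:c1}.
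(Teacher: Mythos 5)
Your proof is correct and takes essentially the same route as the paper's: the forward implication via Proposition~\ref{prop:QP-n-r} applied to preimage sequences, and the reverse implication by passing to the image sequences $\varphi_r(Q)$ and $\phi_r(P)$ (the paper's $Q'$ and $P'$, defined there via the intervals $I_m$ and $J_m$) and transferring the adherence and escaping densities fiber by fiber, which is exactly the content of the paper's Claims 1 and 2.
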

\begin{proof}
$\eqref{cor:f-and-fr:c1} \Longrightarrow \eqref{cor:f-and-fr:c2}$  is an immediate consequence of Proposition~\ref{prop:QP-n-r}, so
we only need to show that $\eqref{cor:f-and-fr:c2}\Longrightarrow\eqref{cor:f-and-fr:c1}$.

Let $Q=\{q_k\}_{k=1}^\infty$.
For every $m\geq 1$, let $I_m=[mr, (m+1)r)$.
Put $Q'=\{m\in\N:\, Q\cap I_m\neq\emptyset\}$ and present $Q'$ as an increasing sequence $\{q'_k\}_{k=1}^\infty$.
First we are going to prove the following:

\textbf{Claim 1}: If $(x_1,\dotsc,x_n)\in X^n$ is an $\overline{\mathcal M}_{Q'}(1)$-adherent point of
$\Delta_n$ for $f^r$, then it is also an $\overline{\mathcal M}_{Q}(1)$-adherent point of
$\Delta_n$ for $f$.

\begin{proof}[Proof of the Claim 1]
For every $\ep>0$, by the continuity of $f$ and the compactness of $X$,
there exists $\ep'>0$ such that if $d(x,y)<\ep'$ then $d(f^i(x),f^i(y))<\ep$ for $i=0,1,\ldots,r-1$.
Then
\[rN((x_1,\dotsc,x_n),[\Delta_n]_{\ep'},f^r)+\{0,1,\dotsc,r-1\}\subset N((x_1,\dotsc,x_n),[\Delta_n]_{\ep},f).\]
Since $(x_1,\dotsc,x_n)\in X^n$ is an $\overline{\mathcal M}_{Q'}(1)$-adherent point of
$\Delta_n$ for $f^r$, for every $a>0$, there exists $t'>0$ such that
\[\frac 1 {t'} \#\Big(\{q'_1,\dotsc,q'_{t'}\}\cap N((x_1,\dotsc,x_n),[\Delta_n]_{\ep'},f^r)\Big)\geq 1-a.\]
Denote $t=\sum_{i=1}^{t'}\#(Q\cap I_{q'_i})$ and observe that $t'\leq t\leq r t'$ and
\[\frac 1 {t} \#\Big(\{q_1,\dotsc,q_{t}\}\cap N((x_1,\dotsc,x_n),[\Delta_n]_{\ep},f)\Big)\geq
\frac{t- a r t'}{t}\geq 1-a r.\]
Thus, $(x_1,\dotsc,x_n)$ is an $\overline{\mathcal M}_{Q}(1)$-adherent point of
$\Delta_n$ for $f$.
\end{proof}

Let $P=\{p_k\}_{k=1}^\infty$.
For every $m\geq 1$, let $J_m=(mr, (m+1)r]$.
Put $P'=\{m\in\N:\, P\cap J_m\neq\emptyset\}$ and present it as an increasing sequence $\{p'_k\}_{k=1}^\infty$.
For $\delta>0$, there exists $\delta'>0$ such that if $d(f^r(x),f^r(y))>\delta'$ then
$d(f^i(x),f^i(y))>\delta$ for $i=1,\ldots,r$.
The proof of next claim is very similar to the proof of Claim 1, therefore we leave it to the reader:

\textbf{Claim 2}: If $(x_1,\dotsc,x_n)\in X^n$ is an $\overline{\mathcal M}_{P'}(1)$-$\delta'$-escaping point of
$\Delta_n$ for $f^r$, then it is also an $\overline{\mathcal M}_{P}(1)$-$\delta$-escaping point of
$\Delta_n$ for $f$.

Now by the above two claims, if $(x_1,\dotsc,x_n)\in X^n$  is
$(\overline{\mathcal M}_{Q'}(1),\overline{\mathcal M}_{P'}(1))$-$n$-$\delta'$-scrambled for $f^r$,
then it is also $(\overline{\mathcal M}_{Q}(1),\overline{\mathcal M}_{P}(1))$-$n$-$\delta$-scrambled for $f$.
It should be noticed that the choices of $Q'$, $P'$ and $\delta'$ depend only on the system
$(X,f)$, $Q$, $P$ and $\delta$, but not on the tuple $(x_1,\dotsc,x_n)$ itself. This ends the proof.
\end{proof}

\section{Uniform Chaos}
The following two definitions come from \cite{AGHSY10} where uniform chaos was defined for the first time.
They help to define more subtle relations between points than in the case of standard scrambled sets.
Additionally, the authors in \cite{AGHSY10} proved that there are numerous dynamical systems
possessing uniformly chaotic subsets, e.g. systems with
positive topological entropy or systems chaotic in the sense of Devaney.

\begin{defn} Let $(X,f)$ be a TDS. A subset $K$ of $X$ is called
\begin{enumerate}
\item \emph{uniformly recurrent} if for every $\ep > 0$ there is an $n \in \N$ with
 $d(f^n x,x) < \ep$ for all $x$ in $K$.
\item \emph{recurrent} if every finite subset of $K$ is uniformly recurrent.
\item \emph{uniformly proximal} if for every $\ep > 0$ there is an $n \in N$ with $\textrm{diam}(f^n(K)) < \ep$.
\item \emph{proximal} if every finite subset of $K$ is uniformly proximal.
\end{enumerate}
\end{defn}

\begin{defn}
Let $(X,f)$ be a TDS. A subset $K$ of $X$ is called a \emph{uniformly chaotic set} if
there are Cantor sets $C_1\subset C_2\subset \cdots$ such that:
\begin{enumerate}
\item $K=\bigcup_{n=1}^\infty C_n$  is a recurrent subset of $X$ and also a proximal subset of $X$.
\item for each $N\in \N$, $C_N$ is uniformly recurrent.
\item for each $N\in \N$, $C_N$ is uniformly proximal.
\end{enumerate}
The system $(X,f)$ is called \emph{(densely) uniformly chaotic} if $(X,f)$ has a (dense) uniformly chaotic subset.
\end{defn}

By Theorem~\ref{thm:eta} we can easily get that if $K$ is a uniformly chaotic set then
for every $n\geq 2$ its closure contains an $(\M_Q(1),\M_Q(1))$-$n$-scrambled subset for some $Q\in\B$.
In fact, the situation is even better. We show that a uniformly chaotic set itself is
$(\M_Q(1),\M_Q(1))$-$n$-scrambled for some $Q\in\B$.

\begin{prop}
If $K\subset X$ is a uniformly chaotic set, then
there exists $Q\in\B$ such that
for every $n\geq 2$, $K$ is $(\M_Q(1),\M_Q(1))$-$n$-scrambled.
\end{prop}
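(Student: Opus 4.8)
The plan is to manufacture a single sequence $Q$ by recording, for each Cantor set $C_N$ in the chain defining $K$, both the times at which $C_N$ is uniformly contracted and the times at which every point of $C_N$ nearly returns, and then to invoke Lemma~\ref{lem:SEQ}. The conceptual key is that conditions (2) and (3) in the definition of a uniformly chaotic set are \emph{uniform over all of $C_N$}: the relevant time-sets depend only on the index $N$ and a precision parameter, never on the individual tuple. This collapses an uncountable collection of scrambling requirements into a countable family of sequences, which is precisely the input Lemma~\ref{lem:SEQ} accepts, and it explains why one $Q$ can serve all tuples and all $n$ at once.

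Concretely, for each $N,j\in\N$ I would set the proximal time-set $P_{N,j}=\{m\in\N:\diam f^m(C_N)<1/j\}$ and the recurrent time-set $S_{N,j}=\{m\in\N: d(f^m x,x)<1/j \text{ for all }x\in C_N\}$; each is non-empty by uniform proximality and uniform recurrence of $C_N$. \textbf{The main obstacle is verifying that every $P_{N,j}$ and $S_{N,j}$ is infinite}, so that it belongs to $\B$ and Lemma~\ref{lem:SEQ} applies. If some $S_{N,j_0}$ were finite, then choosing near-return times for precision $1/j\to 0$ would force a single time $m^*$ with $f^{m^*}|_{C_N}=\mathrm{id}$, making $C_N$ periodic; but then $\diam f^m(C_N)$ takes only finitely many positive values and is bounded below, contradicting uniform proximality. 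Dually, a finite $P_{N,j_0}$ would force a collapse $\diam f^{m^*}(C_N)=0$, after which uniform recurrence could only hold at times $<m^*$, again forcing periodicity and the same contradiction. Once infinitude is established, Lemma~\ref{lem:SEQ} yields $Q\in\B$ with $\overline d(P_{N,j}\mid Q)=\overline d(S_{N,j}\mid Q)=1$ for all $N,j$. Crucially $Q$ is built only from the sets $C_N\subset X$, independently of $n$.

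It then remains to check that this single $Q$ witnesses scrambling for every $n\ge 2$ and every tuple. Fix $(x_1,\dots,x_n)\in K^n\setminus\dn$; its coordinates are pairwise distinct and, since $C_1\subset C_2\subset\cdots$, all lie in a common $C_N$. For adherence to $\Delta_n$: given $\ep>0$ choose $j$ with $1/j<\ep$; every $m\in P_{N,j}$ gives $\max_{i<l} d(f^m x_i,f^m x_l)\le\diam f^m(C_N)<\ep$, so $P_{N,j}\subset N((x_1,\dots,x_n),[\Delta_n]_\ep,f^{(n)})$, and monotonicity of upper density forces density $1$ along $Q$; hence the tuple is an $\M_Q(1)$-adherent point of $\Delta_n$. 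For escaping from $\dn$: put $\delta_0=\min_{i\ne l} d(x_i,x_l)>0$ and fix $\delta=\delta_0/4$, choosing $j$ with $2/j<\delta_0/2$; every $m\in S_{N,j}$ keeps each $f^m x_i$ within $1/j$ of $x_i$, so the triangle inequality gives $\min_{i\ne l} d(f^m x_i,f^m x_l)>\delta_0-2/j>\delta_0/2$, placing the image at distance exceeding $\delta$ from $\dn$, i.e.\ outside $\overline{[\dn]_\delta}$. Thus $S_{N,j}$ witnesses the $\M_Q(1)$-$\delta$-escaping property. Combining the two, each such tuple is $(\M_Q(1),\M_Q(1))$-$n$-$\delta$-scrambled with $\delta$ depending only on the tuple, so $K$ is $(\M_Q(1),\M_Q(1))$-$n$-scrambled for every $n\ge 2$, which is the assertion.
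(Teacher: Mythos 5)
Your proof is correct and follows essentially the same route as the paper's: record the uniform proximality and uniform recurrence of each $C_N$ as countably many time-sets, feed them into Lemma~\ref{lem:SEQ} to obtain a single $Q$ of full relative upper density, and then verify scrambling tuple by tuple after placing each finite tuple inside a common $C_N$. The only addition is your check that the time-sets $P_{N,j}$ and $S_{N,j}$ are infinite (so that Lemma~\ref{lem:SEQ} applies), a point the paper passes over silently; your argument for it is sound.
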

\begin{proof}
By the definition of uniformly chaos, there exist Cantor sets
$C_1\subset C_2\subset \cdots\subset K$ such that $K=\bigcup_{i=1}^{\infty}C_i$ and each $C_i$
is both uniformly proximal and uniformly rigid.
For every $N\in\N$, there  are two strictly increasing sequences
$P^{(N)}=\{p^{(N)}_k\}_{k=1}^\infty$ and $S^{(N)}=\{s^{(N)}_k\}_{k=1}^\infty$ such that
for every $(x_1,\dots,x_n)\in C_N$
\[\lim_{k\to\infty} \max_{1\leq i<j\leq n} d(f^{p^{(N)}_k}(x_i), f^{p^{(N)}_k}(x_j))=0\]
and
\[\lim_{k\to\infty}(f^{s^{(N)}_k}(x_1),\cdots, f^{s^{(N)}_k}(x_n))=(x_1,\ldots,x_n).\]
By Lemma \ref{lem:SEQ}, there exists a strictly increasing sequence $Q$ such that
\[\overline{d}(P^{(N)}|Q)=\overline{d}(S^{(N)}|Q)=1, \text{ for every } N=1,2,\ldots.\]
Therefore, similarly to the proof of Theorem \ref{thm:eta}, it can be easily verified that
for every $n\geq 2$, $K$ is
$(\M_Q(1),\M_Q(1))$-$n$-scrambled.
\end{proof}

The following result may also be of some interest. It shows that uniform chaos is preserved by higher iterates of $f$ and vice-versa, so it behaves
exactly as expected from any reasonable definition of chaos.

\begin{prop}
Let $(X,f)$ be a TDS, $K\subset X$ and $r\geq 1$. Then $K$ is a uniformly chaotic set for $f$ if and only if
it is a uniformly chaotic set for $f^r$.
\end{prop}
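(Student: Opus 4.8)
The plan is to prove the two implications separately, keeping the same nested Cantor sets $C_1\subset C_2\subset\cdots$ with $K=\bigcup_n C_n$ as witnesses, and transferring each of the four defining features (uniform recurrence and uniform proximality of every $C_N$, together with recurrence and proximality of $K$) between $f$ and $f^r$. The direction ``uniformly chaotic for $f^r$ $\Rightarrow$ uniformly chaotic for $f$'' is immediate: every $f^r$-time $r\ell$ is an $f$-time, so a uniform $\ep$-return (resp.\ $\ep$-proximal) time of a set for $f^r$ is one for $f$ as well, and the same $C_N$ witness uniform chaos for $f$.

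For the converse I would first dispose of proximality, which is easy precisely because it is a statement about diameters. By uniform continuity of the finitely many fixed maps $f^0,\dots,f^{r-1}$ on the compact space $X$, given $\ep>0$ one chooses $\delta>0$ with $\diam(A)<\delta \Rightarrow \diam(f^j(A))<\ep$ for $0\le j\le r-1$. If $m$ is a uniform $\delta$-proximal time for a set $A$ (either some $C_N$ or a finite subset of $K$) and $j=(-m)\bmod r$, then $m+j$ is a multiple of $r$ and $\diam(f^{m+j}(A))=\diam(f^{j}(f^{m}(A)))<\ep$, so $m+j$ is a uniform $\ep$-proximal time for $f^r$. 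Hence uniform proximality of each $C_N$ and proximality of $K$ pass to $f^r$.

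The real obstacle is uniform recurrence, where the diameter trick collapses: shifting a return time $n$ to a nearby multiple of $r$ by applying $f^j$ also moves the base points, so smallness of $d(f^{n+j}x,f^{j}x)$ says nothing about $d(f^{n+j}x,x)$. Two naive repairs also fail, namely one cannot in general extract a subsequence of uniform return times lying in $r\N$, and one cannot simply add return times because the modulus of continuity of $f^{N}$ blows up as $N\to\infty$. The way around is a finite, ``outside-in'' induction. Fixing a set $C$ (a $C_N$, or a finite subset of $K$) that is uniformly recurrent for $f$, I would pigeonhole the residues modulo $r$ of a sequence of uniform return times of displacement tending to $0$; this yields a single residue $s$ and return times $\equiv s\ \bmod r$ of arbitrarily small displacement (the degenerate case where some $f^{n}$ fixes $C$ pointwise is trivial, since then $f^{rn}$ does too). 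I then prove by induction on $k$ that for every $\mu>0$ there is a uniform return time $N\equiv ks\pmod r$ with $\sup_{x\in C}d(f^{N}(x),x)<\mu$: for the inductive step one first picks a return time $a\equiv s$ of displacement $<\mu/2$, uses uniform continuity of the \emph{now fixed} map $f^{a}$ to produce a threshold $\gamma>0$, and then applies the inductive hypothesis with target $\min(\gamma,\mu)$ to obtain $N'\equiv ks$; the estimate $d(f^{a+N'}x,x)\le d(f^{a}(f^{N'}x),f^{a}x)+d(f^{a}x,x)$ then bounds the displacement of $a+N'$ by $\mu$. The crucial point is that each of the at most $r$ maps in play is fixed once chosen, so no modulus of continuity is ever iterated to infinity.

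Taking $k=r$ produces a uniform $\ep$-return time $N\equiv 0\pmod r$, that is a multiple of $r$, which establishes uniform recurrence of $C$ for $f^{r}$. Applying this to each $C_N$ and to every finite subset of $K$ yields uniform recurrence of the $C_N$ and recurrence of $K$ for $f^{r}$, which together with the proximality transfer of the previous paragraph completes the forward implication. I expect the main difficulty, and the genuine content of the argument, to be exactly this induction for uniform recurrence; proximality and the reverse implication are routine.
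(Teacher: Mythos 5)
Your proposal is correct and follows essentially the same route as the paper: the reverse implication and the proximality transfer via uniform continuity of $f^0,\dots,f^{r-1}$ are identical, and your induction composing $r$ return times congruent to a fixed residue $s$ modulo $r$, with the modulus of continuity taken for the already-fixed composite map at each step, is exactly the paper's telescoping construction of $n_1,\dots,n_r$ with the auxiliary sequences $\ep_i,\delta_i$. No gaps.
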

\begin{proof}
The ``if'' part is obvious, therefore we only need to show that
if $C\subset X$ is uniformly recurrent or uniformly proximal for $f$, then it has the same property under the action of $f^r$.

If $C$ is uniformly recurrent for $f$, then there exists a sequence $\{k_i\}_{i=1}^\infty\in\B$ such that
$f^{k_i}$ converges uniformly to the identity on $C$.
Without loss of generality, we can assume that there exists $s\in\{0,1,\dotsc,r-1\}$ such that
$k_i\equiv s \pmod r$ for every $i\geq 1$.

Fix $\ep>0$ and denote $\ep_1=\frac 1 2 \ep$. There exists $n_1\in\N$ with $n_1\equiv s \pmod r$
such that $d(f^{n_1} (x),x) < \ep_1$ for all $x$ in $C$.
By the continuity of $f$, there exists $\delta_1>0$ such that
$d(f^{n_1}(y), f^{n_1}(z))<\frac{1}{2} \ep_1$, whenever $d(y,z)<\delta_1$.

Suppose that $\ep_1,\ep_2,\dotsc,\ep_m$,  $\delta_1,\delta_2,\dotsc,\delta_m$ and $n_1,n_2,\dotsc,n_m$ have been chosen
such that for every $i=1,\dotsc, m$, $\ep_{i}=\min\{\delta_{i-1}, \frac 1 2 \ep_{i-1}\}$,
$n_i\equiv s \pmod r$, $d(f^{n_i} (x),x) < \ep_i$ for all $x\in C$ and we have that
$d(f^{n_1+\dotsb+n_i}(y),f^{n_1+\dotsb+n_i}(z))<\frac 1 2 \ep_i$, provided that $d(y,z)<\delta_i$.

Let $\ep_{m+1}=\min\{\delta_{m}, \frac 1 2 \ep_{m}\}$. There exists $n_{m+1}\in\N$
with $n_{m+1}\equiv s \pmod r$ and
such that $d(f^{n_{m+1}} (x),x) < \ep_{m+1}$ for all $x$ in $C$.
When $n_{m+1}$ is fixed, using uniform continuity of $f$ we can easily find also $\delta_{m+1}$ for $\ep_{m+1}/2$ and $n_1+\ldots+n_{m+1}$
completing that way the induction.

Observe that by our construction, for every $x\in X$  we have
\begin{align*}
d(f^{n_1+\dotsb+n_{m+1}}(x),x)&\leq d(f^{n_1+\dotsb+n_{m+1}}(x),f^{n_1+\dotsb+n_{m}}(x))+
d(f^{n_1+\dotsb+n_{m}}(x),x)\\
&\leq \frac{1}{2} \ep_m+d(f^{n_1+\dotsb+n_{m}}(x),x) \leq \sum_{i=0}^{m}\frac{1}{2^i}\ep_1\\
&<\ep.
\end{align*}
In particular, we have
$d(f^{n_1+\dotsb+n_r} x,x) < \ep$ for all $x\in C$ which, since
$n_1+\dotsb+n_r\equiv 0\pmod r$, shows that indeed
$C$ is uniformly recurrent for $f^r$.

Now assume that $C$ is uniformly proximal for $f$.
For every $\ep>0$ there exists a $\delta>0$ such that $d(f^i(x),f^i(y))<\ep/2$
for $i=0,1,\ldots,r$, provided that $d(x,y)<\delta$.
Notice that for the above $\delta$ there exists an $n\in\N$ such that $\textrm{diam}(f^n(C))<\delta$.
Then $\textrm{diam}(f^{n+i}(C))<\ep$ for $i=0,1,\dotsc,r-1$. In particular there is  $j\in\{0,1,\dotsc,r-1\}$
such that $n+j\equiv 0 \pmod r$, therefore $C$ is uniformly proximal for $f^r$.
\end{proof}

\section{Interval maps}

In this section we will analyze which sequences can or cannot lead to chaotic dynamics in the context of interval maps.
From this point on $I$ will always denote the closed unit interval $[0,1]$.
By an \emph{interval map} we mean a continuous map $f\colon I\to I$ and
topological entropy of $f$ is denoted by $\htop(f)$.
It is shown in \cite{SS94} that an interval map has positive entropy
if and only if it distributionally chaotic.
In fact, we can show that for interval maps with positive entropy chaos occurs along any other sequence.
Strictly speaking we have the following.

\begin{thm}\label{thm:int_anyseq}
If $(I,f)$ satisfies $\htop(f)>0$,
then for every $n\geq 2$ there is $\delta>0$ such that for any $Q,P\subset \B$ there exists
a Mycielski $(\overline{\mathcal M}_Q(1),\overline{\mathcal M}_P(1))$-$n$-$\delta$-scrambled set.
\end{thm}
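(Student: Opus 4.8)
The plan is to reduce to a horseshoe and then adapt the argument of Theorem~\ref{thm:exact} to the resulting subsystem. Since $\htop(f)>0$, a classical structure theorem for interval maps yields an integer $r\geq 1$ and two disjoint closed subintervals $I_0,I_1\subset I$ such that, writing $g=f^r$, we have $g(I_0)\supseteq I_0\cup I_1$ and $g(I_1)\supseteq I_0\cup I_1$. Let $Y$ be the set of points whose entire forward $g$-orbit stays in $I_0\cup I_1$; then $Y$ is $g$-invariant, contains a Cantor set, and the itinerary map is a factor map of $(Y,g)$ onto the full shift $(\Sigma_2^+,\sigma)$. Passing to a Cantor subset if necessary we may assume $Y$ is perfect. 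My strategy is to prove the assertion first for $g=f^r$ on $Y$ (with arbitrary sequences and a uniform $\delta$), and then to transfer it to $f$ using the tuple-wise estimates from the proof of Theorem~\ref{cor:f-and-fr}.

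For the escaping part I would proceed as in Theorem~\ref{thm:exact}: the full shift contains $n$ pairwise disjoint minimal sets (for instance $n$ disjoint periodic orbits), and lifting them gives $n$ pairwise disjoint $g$-minimal subsets $A_1,\dots,A_n\subset Y$. Put $\delta=\frac12\min_{i\neq j}d(A_i,A_j)>0$. For any prescribed finite itineraries $w^{(1)},\dots,w^{(n)}$ and any target points $z_i\in A_i$, the covering property $g(I_j)\supseteq I_0\cup I_1$ lets me solve backwards and find $x_i\in Y$ with itinerary prefix $w^{(i)}$ and $g^{|w^{(i)}|}(x_i)=z_i\in A_i$; since each $A_i$ is $g$-invariant, the orbit of $x_i$ eventually stays in $A_i$, so $g^k(x_1,\dots,x_n)\in X\setminus\overline{[\dn]_\delta}$ for all large $k$. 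As the prefixes are arbitrary, such tuples are dense in $Y^n$, and because the relevant time set is cofinite they are $\M_P(1)$-$\delta$-escaping points of $\dn$ for \emph{every} $P\in\B$. By Corollary~\ref{cor:G-delta} this set is $G_\delta$, and being dense it is residual in $Y^n$.

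The genuinely new point is the adherence part, where Theorem~\ref{thm:exact} used exactness of the whole space to collapse open sets onto a single point; here the factor map onto $\Sigma_2^+$ need not be injective, so I cannot simply lift proximality. Instead I would exploit the interval covering directly: given prefixes $w^{(1)},\dots,w^{(n)}$, which we may take of a common length $N$, and any target $z\in Y$, solving backwards through the words produces $x_1,\dots,x_n\in Y$ with these prefixes and with $g^{N}(x_i)=z$ for all $i$. Then $g^k(x_i)=g^{k-N}(z)$ agrees for all $i$ once $k\geq N$, so the tuple is an $\M_Q(1)$-adherent point of $\Delta_n$ for \emph{every} $Q\in\B$; varying the prefixes shows these tuples are dense in $Y^n$, hence residual by Corollary~\ref{cor:G-delta}. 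Intersecting the two residual sets and applying Theorem~\ref{thm:Mycielski-thm} on the perfect space $Y$ yields, for arbitrary $Q,P\in\B$ and the single $\delta$ fixed above, a Mycielski $(\M_Q(1),\M_P(1))$-$n$-$\delta$-scrambled set for $g=f^r$.

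Finally I would transfer this to $f$. Given $Q,P\in\B$ for $f$, the construction in the proof of Theorem~\ref{cor:f-and-fr} produces sequences $Q',P'\in\B$ and, for a chosen level $\delta_0>0$, a separation level $\delta'$ (all depending only on $f,r,Q,P,\delta_0$, not on individual tuples) such that any $(\M_{Q'}(1),\M_{P'}(1))$-$n$-$\delta'$-scrambled tuple for $f^r$ is automatically $(\M_Q(1),\M_P(1))$-$n$-$\delta_0$-scrambled for $f$. Choosing $\delta_0$ small enough that $\delta'\leq\delta$ and feeding $Q',P'$ into the $f^r$-result, the whole Mycielski set obtained above is then $(\M_Q(1),\M_P(1))$-$n$-$\delta_0$-scrambled for $f$. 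Since $Q,P$ were arbitrary and $\delta_0$ is fixed, this is exactly the statement of the theorem. I expect the adherence step to be the main obstacle, namely replacing the abstract use of exactness by the explicit backward-solving argument in the horseshoe; by contrast, the $f^r$-to-$f$ transfer is only a bookkeeping matter already handled by Theorem~\ref{cor:f-and-fr}.
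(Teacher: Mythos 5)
Your overall strategy coincides with the paper's: pass to a power of $f$ restricted to a shift-like invariant set, run the argument of Theorem~\ref{thm:exact} there, and transfer back to $f$ via Theorem~\ref{cor:f-and-fr} (your final transfer step, including the choice of $\delta_0$ with $\delta'(\delta_0)$ below the separation constant, is correct and is exactly what the paper does). The gap is in the middle, and it is broader than the place you flagged: it affects the density in $Y^n$ of \emph{both} families of tuples. From a horseshoe $g(I_0)\supseteq I_0\cup I_1$, $g(I_1)\supseteq I_0\cup I_1$ the itinerary map is only a factor map, and for such a $Y$ the set of points sharing a prescribed itinerary prefix need not have small diameter --- $Y$ may even contain nondegenerate intervals on which all points have the same itinerary. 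Hence producing one backward-solved point for each prefix does not make the escaping tuples, or the eventually-synchronized tuples, dense in $Y^n$: given nonempty open sets $U_1,\dotsc,U_n\subset Y$, the covering property hands you \emph{some} preimage with the right prefix, but you cannot force it to lie in $U_i$. Without density in some perfect set, Theorem~\ref{thm:Mycielski-thm} cannot be applied, and the residuality claims via Corollary~\ref{cor:G-delta} collapse. The remark ``passing to a Cantor subset if necessary we may assume $Y$ is perfect'' does not repair this, since an arbitrary Cantor subset is neither $g$-invariant nor stable under the backward-solving construction.

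The fix is the one the paper uses: for an interval map with $\htop(f)>0$ one has the stronger classical fact (cited from \cite{M11}) that there exist $m\in\N$ and an $f^m$-invariant closed set $\Lambda$ such that $f^m|_\Lambda$ is \emph{conjugate} to $(\Sigma_2^+,\sigma)$, not merely an extension of it. Since the full shift is topologically exact and exactness is preserved by conjugacy, Theorem~\ref{thm:exact} applies verbatim to $(\Lambda,f^m|_\Lambda)$ and yields a single $\delta>0$ together with, for every $Q,P\in\B$, a dense Mycielski $(\overline{\mathcal M}_Q(1),\overline{\mathcal M}_P(1))$-$n$-$\delta$-scrambled subset of $\Lambda$ for $f^m|_\Lambda$; your concluding $f^r$-to-$f$ transfer then finishes the proof. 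If you insist on starting from a bare horseshoe, you must first refine it to an invariant Cantor set on which the itinerary map is injective; only then do cylinders form a neighborhood basis and your density claims become valid.
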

\begin{proof}
If $\htop(f)>0$, it is known (e.g. see \cite{M11}) that
there is an $f^m$-invariant closed set $\Lambda$ such that $f^m|_\Lambda$ is conjugate with
$(\Sigma_2^+,\sigma)$.
Then the result immediately follows by Theorem \ref{thm:exact} and Theorem \ref{cor:f-and-fr}.
\end{proof}

\begin{rem}
Situation in Theorem~\ref{thm:int_anyseq} is very special.
In general, there are known numerous examples of
proximal systems with positive entropy.
But such systems are never distributionally chaotic (see \cite{Pikula} or \cite{POTrans}).
In the case of these systems, some thickly syndetic sequences will be bad choices for distributional chaos.
\end{rem}

The authors in \cite{Wang07} proved that an interval map is Li-Yorke chaotic if
and only if it is distributionally chaotic in some sequence. However, as we will see,
for zero entropy interval maps there are some restrictions on the sequence.
Before stating our result, let use recall the following
periodic decomposition of $\omega$-limit sets (e.g. see \cite[Thm.~3.5]{SmitalOLS}).

\begin{lem}\label{lem:SmitalOLS}
Let $(I,f)$ be such that $\htop(f)=0$ and fix $x_0\in I$ such that $\omega(x_0, f)$ is infinite.
Then there is a sequence of (not necessarily closed) intervals
$\set{\set{L_s^i}_{i=0}^{2^s-1}}_{s=0}^\infty$ such that:
\begin{enumerate}
\item $f(L^i_s)=L^{i+1 \modn{2^s}}_s$ for $i=0,1,\ldots, 2^s-1$,
\item intervals $L_s^0,\ldots, L^{2^s-1}_s$ are pairwise disjoint for every $s\geq 0$,
\item $L^0_{s+1}\cup L^{2^s}_{s+1}\subset L^0_{s}$ for every $s\geq 0$,
\item $\omega(x_0,f^{2^s})\subset L^0_s$ and $\omega(x_0,f)\subset \bigcup_{i=0}^{2^s-1}L^i_s$ for every $s\geq 0$.
\end{enumerate}
\end{lem}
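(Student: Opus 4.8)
The plan is to construct the intervals by induction on $s$, at each level \emph{doubling} the periodic structure that is already in place, and the engine of the whole argument is the Sharkovskii-type structure theory of zero entropy interval maps. The single structural input I would isolate at the outset is the following \emph{doubling lemma}: if $g\colon J\to J$ is a continuous self-map of a closed interval with $\htop(g)=0$ that possesses an infinite $\omega$-limit set $\omega(y,g)$, then $J$ contains two closed subintervals $A_0,A_1$ with disjoint interiors such that $g(A_0)\subset A_1$, $g(A_1)\subset A_0$, and $\omega(y,g)\subset A_0\cup A_1$. This is precisely where zero entropy is used, and it is the hard part: if $J$ carried an infinite $\omega$-limit set but admitted no such period-two splitting, then, following Block--Coppel, one could produce a periodic point whose period is not a power of two, hence a horseshoe for some power of $g$, forcing $\htop(g)>0$ and contradicting the hypothesis. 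Everything after this lemma is bookkeeping, so I would either cite the standard structure theory (Block--Coppel, Misiurewicz) for it or, as the paper already does, reduce the whole statement to it.

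Granting the doubling lemma, I would run the induction as follows. For $s=0$ take $\bar L^0_0=I$, which is $f$-invariant and contains $\omega(x_0,f)=\omega(x_0,f^{2^0})$. Suppose at stage $s$ we have closed intervals $\bar L^0_s,\dots,\bar L^{2^s-1}_s$ with pairwise disjoint interiors, cyclically permuted by $f$ via $f(\bar L^i_s)\subset \bar L^{i+1\bmod 2^s}_s$, whose union contains $\omega(x_0,f)$, and with $\bar L^0_s$ chosen to be the piece containing $\omega(x_0,f^{2^s})$. Put $g=f^{2^s}|_{\bar L^0_s}$; then $g$ maps $\bar L^0_s$ into itself, $\htop(g)=0$ as an iterate restricted to an invariant interval, and $\omega(x_0,g)=\omega(x_0,f^{2^s})$ is infinite. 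Applying the doubling lemma yields $A_0,A_1\subset\bar L^0_s$ swapped by $g$ with $\omega(x_0,f^{2^s})\subset A_0\cup A_1$, and I would label $A_0$ so that it meets $\omega(x_0,f^{2^{s+1}})$. I then spread the two pieces around the $f$-cycle by setting
\[ \bar L^{i}_{s+1}=f^{i}(A_0)\ \text{and}\ \bar L^{2^s+i}_{s+1}=f^{i}(A_1),\quad 0\le i<2^s. \]
Property (3) is immediate since $\bar L^0_{s+1}\cup\bar L^{2^s}_{s+1}=A_0\cup A_1\subset\bar L^0_s$; the cyclic relation (1) holds by definition except at the two wrap-around indices, where $f(\bar L^{2^s-1}_{s+1})=g(A_0)\subset A_1=\bar L^{2^s}_{s+1}$ and similarly $f(\bar L^{2^{s+1}-1}_{s+1})=g(A_1)\subset A_0=\bar L^0_{s+1}$; disjointness of interiors (2) follows from the disjointness at level $s$ together with $A_0\cap A_1$ having disjoint interiors, propagated by $f$, since two such images can overlap only when their cyclic indices agree. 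For (4), $\omega(x_0,f^{2^{s+1}})\subset A_0=\bar L^0_{s+1}$ by the choice of $A_0$, and the standard residue decomposition $\omega(x_0,f)=\bigcup_{j=0}^{2^{s+1}-1}\omega(f^j(x_0),f^{2^{s+1}})$ places $\omega(x_0,f)$ inside $\bigcup_i\bar L^i_{s+1}$.

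Finally I would pass from these closed intervals with disjoint interiors to the genuinely pairwise disjoint, \emph{not necessarily closed} intervals $L^i_s$ demanded by the statement: at each level there are only finitely many shared endpoints between consecutive cyclic pieces, and I would assign each such endpoint to exactly one of the two intervals meeting it, doing so compatibly with $f(L^i_s)=L^{i+1\bmod 2^s}_s$ (start by deciding the assignment on $L^0_s$ and transport it by $f$ around the cycle). This endpoint cleanup is exactly the reason the $L^i_s$ cannot be required closed, and it preserves properties (1)--(4) since it changes the intervals only on a finite set disjoint from the $\omega$-limit sets involved. The only genuinely nontrivial step is the doubling lemma; once it is in hand, the construction and verifications are routine, so in the write-up I would state the doubling lemma explicitly, attribute it to the standard zero-entropy structure theory, and then carry out the induction above.
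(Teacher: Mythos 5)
First, note that the paper itself offers no proof of this lemma: it is quoted as a known ``periodic decomposition of $\omega$-limit sets'' with a pointer to \cite[Thm.~3.5]{SmitalOLS}. So you are supplying an argument where the paper supplies a citation. Your overall strategy --- induct on $s$ and isolate a single period-doubling lemma for zero-entropy interval maps as the only place where $\htop(f)=0$ enters --- is indeed the standard route to Sm\'{\i}tal's decomposition, and stating that doubling lemma explicitly is the right move.

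However, the induction as written has a genuine gap at the step where you ``spread the two pieces around the $f$-cycle''. You assert that the pairwise disjointness of the interiors of $\bar L^i_{s+1}=f^i(A_0)$ and $\bar L^{2^s+i}_{s+1}=f^i(A_1)$ is ``propagated by $f$'' from the disjointness of the interiors of $A_0$ and $A_1$. For distinct residues $i\not\equiv j \modn{2^s}$ this is fine, because the images lie in distinct level-$s$ intervals; but for the same residue $i\geq 1$ nothing forces $f^i(A_0)$ and $f^i(A_1)$ to have disjoint interiors: a continuous interval map can fold two abutting intervals onto substantially overlapping images, and the constraints $f^{2^s}(A_0)\subset A_1$, $f^{2^s}(A_1)\subset A_0$ do not forbid this --- the overlap may simply be collapsed by $f^{2^s-i}$ onto the common endpoint of $A_0$ and $A_1$, so no contradiction arises, and your final ``endpoint cleanup'' (which only reassigns finitely many shared endpoints) cannot repair an overlap of interiors. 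This disjointness is precisely the nontrivial content of the structure theorem: one must either apply the doubling lemma inside each $\bar L^i_s$ separately (to $f^{2^s}$ and the infinite set $\omega(f^i(x_0),f^{2^s})$) and then verify that the resulting labels are consistent around the cycle, or invoke the stronger fact that the $2^{s+1}$ sets $\omega(f^j(x_0),f^{2^{s+1}})$ have pairwise disjoint convex hulls; both use $\htop(f)=0$ a second time, so this is not mere bookkeeping. Two smaller points: at the wrap-around index your construction only yields $f(\bar L^{2^s-1}_{s+1})\subset \bar L^{2^s}_{s+1}$, whereas item (1) asserts the equality $f(L^i_s)=L^{i+1\modn{2^s}}_s$ (this is one reason Sm\'{\i}tal's intervals are not closed and are chosen more carefully than as the images $f^i(A_0)$); and choosing $A_0$ so that it merely ``meets'' $\omega(x_0,f^{2^{s+1}})$ does not give $\omega(x_0,f^{2^{s+1}})\subset A_0$, since a priori that set could straddle both halves --- excluding this is again part of the same structure theory. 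The statement is of course true and fully proved in the cited reference, but your write-up conceals the two places where the zero-entropy hypothesis must be reused.
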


\begin{thm}\label{thm:P}
Let $(I,f)$ satisfy $\htop(f)=0$ and
assume that $P\in \B$ is such that:
\begin{enumerate}
  \item\label{dc:specP} for every $n>0$ there is $k\in\N$ such that $\# (P\cap [i-k,i+k])>n$ for every $i\in P$.
\end{enumerate}
Then no pair in $I\times I$ can be distributionally scrambled in the sequence $P$ under the action of $f$.
\end{thm}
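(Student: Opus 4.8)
The plan is to argue by contradiction. Suppose some pair $(x,y)\in I\times I$ is distributionally scrambled in $P$, so there is $\delta>0$ with $\Phi^*_{(x,y)}(t\mid P)=1$ for every $t>0$ and $\Phi_{(x,y)}(\delta\mid P)=0$. Writing $T_{\mathrm{far}}=\{t\in\N: d(f^tx,f^ty)\ge\delta\}$ and $T_{\mathrm{close}}(\varepsilon)=\{t\in\N: d(f^tx,f^ty)<\varepsilon\}$, these two requirements say exactly that $\overline{d}(T_{\mathrm{far}}\mid P)=1$ and $\overline{d}(T_{\mathrm{close}}(\varepsilon)\mid P)=1$ for every $\varepsilon>0$. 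The whole strategy is to use the rigidity of zero entropy interval maps to show that \emph{one} of these two sets must be contained in a union of boundedly many residue classes modulo an arbitrarily large modulus, and then to show, using the clustering hypothesis on $P$, that such a set can never have $\overline{d}(\,\cdot\mid P)=1$.

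First I would dispose of the case of finite $\omega$-limit sets. If both $\omega(x,f)$ and $\omega(y,f)$ are finite, each orbit is asymptotically periodic and the limiting distance profile $t\mapsto\lim_j d(f^{t+jP_0}x,f^{t+jP_0}y)$ (with $P_0$ a common period) is periodic; by continuity of $f$ it is either identically $0$, whence the pair is asymptotic and $\Phi(\delta\mid P)=1\neq0$, or bounded below by some $c_0>0$, whence $\Phi^*(t\mid P)=0$ for $t<c_0$. Either way scrambling fails, for every $P$. So I may assume at least one $\omega$-limit set, say $\omega(x,f)$, is infinite and invoke the periodic decomposition of Lemma~\ref{lem:SmitalOLS}. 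Using that $(x,y)$ is proximal (forced by $\Phi^*\equiv1$) to reconcile the decompositions of $\omega(x,f)$ and $\omega(y,f)$ into a single tower $\{L^i_s\}$, I would obtain, for all large $t$, memberships $f^tx\in\overline{L^{a_s(t)}_s}$ and $f^ty\in\overline{L^{b_s(t)}_s}$ with $a_s(t)\equiv t+\alpha_s$ and $b_s(t)\equiv t+\beta_s\pmod{2^s}$, the offsets compatible across levels and hence defining two $2$-adic addresses; let $s_0$ be the first level at which these addresses differ (possibly $s_0=\infty$).

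The dichotomy is then as follows. If the addresses coincide ($s_0=\infty$), both orbits lie in the \emph{same} level-$s$ interval for every $s$, so $d(f^tx,f^ty)\le\diam\overline{L^{a_s(t)}_s}$; since at most $\lfloor1/\delta\rfloor$ of the disjoint intervals $L^0_s,\dots,L^{2^s-1}_s$ can have diameter $\ge\delta$, the set $T_{\mathrm{far}}$ is confined to at most $\lfloor1/\delta\rfloor$ residues modulo $2^s$, and this holds for \emph{every} $s$. If instead the addresses split at a finite level $s_0$, separation is only controlled modulo the fixed number $2^{s_0}$, so here proximity must do the work: the two orbits sit in complementary halves of a common level-$s_0$ interval, and $\Phi^*\equiv1$ forces them to approach a common junction point $z$; being within $\varepsilon$ of $z$ pins the address of $f^tx$ to boundedly many residues modulo $2^{s(\varepsilon)}$ with $s(\varepsilon)\to\infty$, so $T_{\mathrm{close}}(\varepsilon)$ is confined to boundedly many residues modulo an arbitrarily large modulus. (The mixed case where only one $\omega$-limit set is infinite is absorbed into this second alternative, treating the periodic orbit as a degenerate address.) Thus in all cases at least one of $T_{\mathrm{far}},T_{\mathrm{close}}(\varepsilon)$ lies in $\le C$ residue classes modulo some $M$ that may be taken larger than any prescribed bound.

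It remains to extract the contradiction from clustering, and here the argument is mechanical. Given a set $A$ confined to $\le C$ residues modulo $M$, I apply hypothesis~\eqref{dc:specP} with $n=C+1$ to get $k$ with $\#(P\cap[i-k,i+k])>C+1$ for all $i\in P$, and then choose $M>2k+1$. Each window $[i-k,i+k]$ then contains more than $C+1$ elements of $P$ in pairwise distinct residues modulo $M$, at most $C$ of which lie in $A$, so every such window meets $P\setminus A$; a routine double count of the incidences $\{(i,p):i\in P,\ p\in P\setminus A,\ |i-p|\le k\}$ over the first $m$ elements of $P$ gives $\underline{d}(P\setminus A\mid P)\ge\tfrac{1}{2k+1}>0$, whence $\overline{d}(A\mid P)<1$, contradicting $\overline{d}(A\mid P)=1$. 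I expect the genuine difficulty to be concentrated in the third paragraph, namely the split-address case: converting mere proximality into the quantitative confinement of the $\varepsilon$-proximity times to few residues modulo $2^{s(\varepsilon)}$ with $s(\varepsilon)\to\infty$, which needs a precise description of how both orbits accumulate at the junction $z$ together with control of the diameters of the tower intervals shrinking to $z$. The reconciliation of the two a priori distinct decompositions into one common tower is the other delicate point; by comparison the same-address case and the final counting argument are straightforward.
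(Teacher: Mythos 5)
Your overall strategy coincides with the paper's: pass to the periodic decomposition of Lemma~\ref{lem:SmitalOLS}, show that the relevant set of times is trapped in boundedly many residue classes modulo $2^s$ with $2^s$ larger than the clustering constant $k$, and then use hypothesis~\eqref{dc:specP} to show that such a set cannot have upper density $1$ relative to $P$. Your final counting argument is correct and is in fact a cleaner packaging of the paper's clustering via the sets $A_n$ and $C_n$ (the paper gets the bound $6m/M<1/2$; your incidence count gives $\overline{d}(A\mid P)\le 1-\frac{1}{2k+1}$, and either suffices). The genuine gap is exactly where you suspected it: the split-address case. There the quantitative confinement you need --- that the times $t$ with $d(f^t(x),f^t(y))<\ep$ lie in at most $C$ residue classes modulo $2^{s(\ep)}$, with $C$ \emph{independent of} $\ep$ while $s(\ep)\to\infty$ --- does not follow from ``both orbits approach a junction point $z$''. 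Being within $\ep$ of $z$ only restricts the phase $a_s(t)$ to those $i$ for which $\overline{L^i_s}$ meets $(z-\ep,z+\ep)$, and since the $L^i_s$ are merely pairwise disjoint, arbitrarily many of them (all of small diameter) may accumulate at $z$; there is no uniform bound on their number as $s$ grows. Without a fixed $C$ you cannot first apply~\eqref{dc:specP} with $n=C+1$ to obtain $k$ and then choose the modulus larger than $2k+1$, so the contradiction never materializes in this case. The same defect infects your treatment of the mixed case (one finite and one infinite $\omega$-limit set), which you fold into the split case.

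The paper avoids the problem by never allowing that case to arise: proximality of $(x,y)$ (forced by $\Phi^*_{(x,y)}(t\mid P)=1$ for all $t>0$) already synchronizes both orbits with the decomposition of $\omega(x,f)$ at \emph{every} level $s$ --- there is $N=N(s)$ with $f^{N+i}(x),f^{N+i}(y)\in L^{i\bmod 2^s}_s$ for all $i\ge 0$ --- so only your same-address alternative occurs, and only the set $T_{\mathrm{far}}$ is ever needed. This synchronization is the standard fact from Sm\'{\i}tal's analysis of zero-entropy interval maps (the paper invokes it without detailed proof); the correct repair of your argument is therefore not to handle the split case but to prove it is vacuous. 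With that replacement the rest of your proposal --- the disposal of finite $\omega$-limit sets, the bound $\#\set{i:\diam L^i_s\ge\delta}\le\lfloor 1/\delta\rfloor$ on bad residues, and the residue-class counting against $P$ --- goes through and matches the paper.
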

\begin{proof}
Fix a sequence $P=\{p_t\}_{t=1}^\infty\in \B$ satisfying \eqref{dc:specP} and
suppose on the contrary that there is a pair $(x,y)\in I\times I$
which is distributionally scrambled  in the sequence $P$.
Fix a positive number $\ep$ such that the upper density of
$N((x,y), I\times I\setminus \overline{[\Delta]_\ep})$ with respect to $P$ is $1$.
Choose an integer $m$ large enough to satisfy $m \ep >1$,
and observe that $I$ can contain less than $m$ intervals with pairwise disjoint interiors and diameters at least $\ep$.
Fix a positive integer $M$  such that $7m/M<1/2$ and finally
let $k\in \N$ be such that if $i\in P$ then $\# (P\cap [i-k,i+k])>M$.

If $\omega(x,f)$ is finite then it is easy to see that  $x,y$ are either distal or asymptotic.
So the only possibility is that both $x$ and $y$ have infinite $\omega$-limit sets.
Let $\set{L_s^i}$ be the sequence of
intervals provided by Lemma~\ref{lem:SmitalOLS} for $\omega(x,f)$.

Fix $s\in\N$ such that $2^s>k$ and put $B=\set{0\leq i\leq 2^s-1:\,\diam (L^i_s)\geq \ep}$. Note that $\#(B)<m$.
The pair $(x,y)$ is proximal, so  there is $N=N(s)\geq 0$ such that $f^N(x),f^N(y)\in L^0_s$
and $f^{N+i}(x),f^{N+i}(y)\in L^{i \modn{2^s}}_s$ for every $i\geq 0$.
Therefore the set
\[ B'=\set{j\in P:\,j\geq N \text{ and } (j-N) \pmod{2^s} \in B}\]
is infinite. 

We will define sets $A_n\subset \N$ inductively.
Let $j_1\geq N$ be the first integer such that $B'\cap [j_1 k, j_1k +k)\neq \emptyset$.
Denote $A_1=[j_1k-2k,j_1k+4k)\cap P$.
Assume that sets $A_1,\dotsc, A_n$ and numbers $j_1<j_2<\dotsb < j_n$ have already been defined.
Let $j_{n+1}$ be the smallest possible integer such that $j_{n+1}>j_n+3$ and
$B'\cap [j_{n+1}k,j_{n+1}k+k)\neq \emptyset$. Denote $A_{n+1}=[j_{n+1}k-2k,j_{n+1}k+4k)\cap P$.
By the above method we generate a sequence of sets of integers $\set{A_n}_{n=1}^\infty$ such that:
\begin{enumerate}
\item if $A_i\cap A_j\neq \emptyset$ then $|i-j|\leq 1$,
\item $B'\subset \bigcup_{n=1}^\infty A_n \cup [0,N)$,
\item $\#(A_n\cap B')\leq 6m$.
\end{enumerate}

For $n\geq 1$, let $C_n=P \cap [j_n k -k , j_n k +2k)$.
Then $\#(C_n)>M$, $C_n\subset A_n$ and $\{C_i\}_{i\geq 1}$ are pairwise disjoint.
For $t>N$, let $n(t)=\max\{n\geq 1: \max A_n\leq p_t\}$. Then for sufficiently large $t$ we have
\begin{align*}
\frac 1 t\#\Big(N\big((x,y), I\times I\setminus \overline{[\Delta]_\ep}\big)\cap \{p_1,\dotsc,p_t\}\Big)&\leq
\frac 1 t \bigg(\sum_{i=1}^{n(t)} \#(A_i\cap B')+N+6k\bigg)\\
&\leq \frac{\sum_{i=1}^{n(t)} \#(A_i\cap B')}{\sum_{i=1}^{n(t)} \#(C_i)} + \frac{N+6k}{t}\\
&\leq \frac{6m}{M}+\frac{N+6k}{t} \\
&<\frac 1 2.
\end{align*}
This is a contradiction.
\end{proof}

\begin{rem}
There are many sets $P\in \B$ satisfying the assumption of Theorem~\ref{thm:P}, for example:
\begin{enumerate}
\item set with positive \emph{lower Banach density}, i.e.,
\[\liminf_{(n-m)\to\infty}\frac{P\cap\{m,m+1,\dotsc,n-m\}}{n-m+1}>0.\]
\item any ``pure'' IP-set, that is a set $P$ consisting of all sums of the form $q_{i_1}+\ldots+ q_{i_n}$,
$i_{1}<i_2<\dotsb<i_{n}$, $n>0$, where $Q=\set{q_i}\in \B$.
\end{enumerate}
\end{rem}

\section{Relations between $n$ and $(n+1)$-scrambled sets}

Recently the authors of \cite{TF11} have announced that for every $n\geq 2$,
there is a dynamical system with an uncountable distributionally $n$-scrambled set
but without any distributionally $(n+1)$-scrambled tuples.
Their example however contains an uncountable $(n+1)$-scrambled set.
Here, by a different construction,
we show that the existence of uncountable $(n+1)$-scrambled set can also be removed from the dynamics.

\begin{thm}\label{thm:stuple}
For every $n\geq 2$, there exists a subshift $X$ of $\Sigma_n$ with an uncountable distributionally $n$-scrambled set
but without any uncountable $(n+1)$-scrambled sets.
\end{thm}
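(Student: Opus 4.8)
The plan is to construct the subshift $X\subset\Sigma_n$ explicitly by gluing together long blocks over the alphabet $\A=\{0,1,\dots,n-1\}$, encoding an uncountable family of sequences that are pairwise distributionally $n$-scrambled, while making the combinatorics of admissible words so rigid that no $(n+1)$ distinct points can ever be pairwise far apart along a subsequence of density zero in the $\liminf$ sense \emph{and} simultaneously close along another subsequence. The key device I would use is a parametrization: for each $\omega\in\{0,1\}^{\N}$ (or some Cantor-set index set) build a point $x_\omega\in X$ so that the orbit closures of the $x_\omega$ exhibit the two hallmark behaviors of Definition of distributional scrambling --- long stretches where the $n$ coordinates are forced to be \emph{simultaneously} proximal (giving $\Phi^*=1$), alternating with long stretches where all $\binom{n}{2}$ pairwise distances are bounded below (giving $\Phi(\delta)=0$). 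The natural way to realize this is to use a single ``master'' sequence of transition times $\{t_k\}$ and to shift the symbol $0,1,\dots,n-1$ cyclically so that exactly $n$ symbols appear as the canonical spread-out tuple, but never $n+1$.

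First I would fix the block structure. Choose rapidly increasing integers $N_k$ and build admissible words by concatenating, at stage $k$, a ``diagonal'' block in which all coordinates agree (the proximal phase) followed by a ``spread'' block realizing the full $n$-symbol separation. The freedom in choosing, at each stage, one of finitely many continuations indexed by $\omega_k$ gives an uncountable (indeed Cantor) set of points $\{x_\omega\}$. Second, I would verify the positive direction: for any $n$ distinct points among the $\{x_\omega\}$ (equivalently, any tuple in $K^n\setminus\dn$, where $K=\overline{\{x_\omega\}}$), the proximal phases force $\Phi^*_{(x_1,\dots,x_n)}(t\mid\N)=1$ for all $t>0$ because the agreement blocks occupy asymptotic density $1$ along the $\liminf$-defining windows, while the spread phases force $\Phi_{(x_1,\dots,x_n)}(\delta)=0$ for a fixed $\delta>0$ determined by the symbol metric on $\Sigma_n$. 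This shows $K$ is an uncountable distributionally $n$-scrambled set. I expect this half to reduce to a careful but routine density computation along the block lengths, with the rapid growth $N_{k+1}\gg N_k$ ensuring each phase dominates the Cesàro average in turn.

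The hard part will be the negative direction: proving that $X$ contains \emph{no} uncountable $(n+1)$-scrambled set. Here I would argue that the admissibility constraints I impose make the language of $X$ so restricted that any $(n+1)$-tuple which is $(n+1)$-scrambled must have its coordinates collapse. The mechanism I would aim for is that at every spread block only $n$ distinct symbols can be displayed in the relevant coordinates (this is the point of working in $\Sigma_n$ rather than a larger alphabet), so among any $n+1$ candidate points at least two must share a symbol infinitely often in a way that makes the $\liminf$ of the \emph{max} pairwise distance positive, violating the $\liminf=0$ requirement of an $(n+1)$-scrambled tuple. Translating this symbol-counting pigeonhole into the asymptotic statement ``$\liminf_{k}\max_{i<j}d(f^k(x_i),f^k(x_j))>0$ for all but countably many $(n+1)$-tuples'' is the crux; I would control it by showing that the set of indices $\omega$ producing an extra distinct symbol in the $(n+1)$st coordinate is at most countable, so any uncountable subset forces a repeated symbol and hence a persistently-proximal pair inside every $(n+1)$-tuple. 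Finally I would note --- as the statement's parenthetical remark anticipates --- that isolated $(n+1)$-scrambled \emph{tuples} may still survive at the countably many ``exceptional'' indices, which is consistent with ruling out only uncountable $(n+1)$-scrambled \emph{sets}.
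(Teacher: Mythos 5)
Your overall blueprint (sparse blocks each carrying a single symbol from an $n$-letter alphabet, separated by long all-zero stretches, with a pigeonhole among $n+1$ points) is the same idea the paper uses, but as written the crucial negative direction does not close. You apply the pigeonhole to the wrong half of the definition of an $(n+1)$-scrambled tuple: if at every large time $k$ two of the $n+1$ points display the same local word, the conclusion is that $\min_{1\le i<j\le n+1}d(f^k(x_i),f^k(x_j))<\ep$ for \emph{all} large $k$, hence $\limsup_{k}\min_{i<j}d(f^k(x_i),f^k(x_j))=0$, which violates the \emph{separation} requirement. Two points being forced to agree cannot make ``the $\liminf$ of the max pairwise distance positive''; that inequality points the other way, and in your construction the $\liminf$ of the max is in fact $0$ (all points agree on the zero stretches), so the proximality half survives and it is the other half you must kill. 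Relatedly, your fallback plan --- showing that ``the set of indices $\omega$ producing an extra distinct symbol is at most countable'' --- is not the right bookkeeping. The paper instead shows that \emph{no} $(n+1)$-tuple inside one shifted copy $Z_i$ of the basic family is $(n+1)$-scrambled (on any fixed-length window far out, every point reads $0^ra^s0^t$ with $r,s,t$ determined by the window and only $a\in\{0,\dots,n-1\}$ free, so two of $n+1$ points coincide there), and then observes that the closure adds only countably many extra points and that an uncountable set must meet some single $Z_i$ uncountably. You do not address the closure/shift-invariance step at all, and it is needed since the object must be a subshift.

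The positive direction is also not routine with your parametrization. Indexing by $\omega\in\{0,1\}^{\N}$ with finitely many continuations per stage cannot give distributional $n$-scrambling for $n\ge3$: to get $\Phi_{(x_1,\dots,x_n)}(\delta)=0$ you need, for every $n$-tuple of distinct parameters, a set of times of density approaching $1$ along a subsequence at which \emph{all} $\binom{n}{2}$ pairs are $\delta$-separated, i.e.\ stages at which the $n$ points carry pairwise distinct symbols; with only two continuations per stage this is impossible for $n\ge 3$, and even with $n$ continuations per stage it is not automatic for an arbitrary uncountable parameter set. The paper resolves this by applying Mycielski's theorem to the residual set of $n$-tuples in $(\Sigma_n^+)^n$ having infinitely many coordinates where all $n$ entries are pairwise distinct, extracting a Cantor set of parameters with that property; some such selection step is unavoidable in your approach as well.
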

\begin{proof}
Fix a sequence $\{m_k\}\in\B$ such that $m_0=0$ and $m_{k+1}-m_k>2^{k+1}$ and
a sequence $\{a_i\}\in \B$ such that $\lim_{i\ra\infty}\frac{a_i}{a_{i+1}}=0$.
Put $I_i=[a_i, a_{i+1})\cap \N$.
For any $x\in \Sigma_n^+$ define $z^x\in \Sigma_n$ by putting $z^x_{j}=x_k$ for $j\in I_{m_k}$ and
$z^x_j=0$ for $j\in \mathbb{Z}\setminus \bigcup_{k=0}^\infty I_{m_k}$.

Let $Z_0=\{z^x:\, x\in\Sigma^+_n\}$ and $Z_i=\sigma^i(Z_0)$ for $i\in\mathbb Z$.
Set
\[ X=\overline{\bigcup_{i\in\mathbb Z} Z_i}=\bigcup_{i\in\mathbb Z} Z_i \cup Q.\]
Observe that, since $\{m_k\}$ is strictly increasing, $Q$ is at most countable.
Clearly, $(X,\sigma)$ is a subshift of $(\Sigma_n,\sigma)$. We will show that $X$ satisfies our requirements.

The set $R$ of $n$-tuples $(x^{(1)},\ldots, x^{(n)})\in (\Sigma_n^+)^n$
such that for every $N$ there is $m>N$
satisfying $x^{(i)}_m\neq y^{(i)}_m$ for $1\leq i<j\leq n$ is residual in $\Sigma_n^+$.
But then by Mycielski theorem there is a Cantor set $C\subset \Sigma_n^+$
such that for every $n$-tuples $(x^{(1)},\ldots, x^{(n)})\in C^n\setminus \Delta^{(n)}$,
there is an increasing sequence $\{q_k\}$ such that $x^{(i)}_{q_k}\neq y^{(j)}_{q_k}$ for $1\leq i<j\leq n$.
Now, let $S=\set{z^x:\, x\in C}$. If we fix any tuple $(z^1,\ldots, z^n)\in S^n \setminus \Delta^{(n)}$
then by the choice of $C$ there is an increasing sequence $\{q_k\}$ such that
$z^i_t \neq z^j_t$ for every $t\in I_{m_{q_k}}$ and $1\leq i<j\leq n$.
Similarly, there is an increasing sequence $\{q_k\}$ such that
$z^i_t = z^j_t=0$ for every $t\in I_{m_{q_k}}$ and $1\leq i<j\leq n$.
By the choice of sequence $\{a_i\}$ we immediately get that
the $n$-tuple $(z^1,\ldots, z^n)$ is distributionally $n$-scrambled,
and so $S$ is an uncountable distributionally $n$-scrambled set.

Next, we show that there is no uncountable $(n+1)$-scrambled set in $X$.
On the contrary, assume that $S\subset X$ is an uncountable $(n+1)$-scrambled set.
Since $Q$ is at most countable, there exists an $i\in\mathbb{Z}$ such that $S\cap Z_i$ is uncountable.
Without loss of generality, assume that $S\subset Z_0$.
This is a contradiction. To finish the proof, it is enough to show that $Z_0$  does not contain $(n+1)$-scrambled tuples.

Let $z^1,z^2,\dotsc,z^{n+1} \in Z_0$.
Fix any $\ep>0$ and $N>0$ such that $2^{-N}<\ep$.
For any $k>m_{3N}$,  by the definition of $Z_0$ we have that
each $z^i_{[k-N,k+N]}$ is a block of the form $0^r a^s 0^t$ for some $r,s,t\geq 0$ with $r+s+t=2N+1$ and
some symbol $a\in\{0,1,\dots,n-1\}$.
By the pigeonhole principle, there exist $i_1\neq i_2$ such that
$z^{i_1}_{[k-N,k+N]}=z^{i_2}_{[k-N,k+N]}$. Therefore,
\[ \min_{1\leq i<j\leq n+1}d(\sigma^k(z^i),\sigma^k(z^j))<\ep,\]
and $(z^1,z^2,\dotsc, z^{n+1})$ cannot be an $(n+1)$-scrambled tuple.
\end{proof}

Note that the subshift constructed in the proof of Theorem~\ref{thm:stuple} contains $(n+1)$-scrambled tuples.
Then a natural question is as follows:

\begin{que}
Is there a system $(X,f)$ with an uncountable distributionally $2$-scrambled set but without any $3$-scrambled tuples?
\end{que}

\subsection*{Acknowledgement}
The first author was  supported by the National Natural Science Foundation of China,
grants no.~11071231, 11001071 and 11171320.
The research of second author was supported by the Polish Ministry of Science and Higher Education (2012). His research leading to results contained in this paper was supported by the Marie Curie
European Reintegration Grant of the European Commission under grant agreement no.~PERG08-GA-2010-272297.


\begin{thebibliography}{99}
\bibitem{A97} E. Akin, \emph{Recurrence in Topological Dynamical Systems: Furstenberg
Families and Ellis Actions}, Plenum Press, New York, 1997.

\bibitem{AGHSY10}  E. Akin, E. Glasner, W. Huang, S. Shao and X. Ye,
\emph{Sufficient conditions under which a transitive system is chaotic},
Ergodic Theory Dynam. Systems. \textbf{40} (2010), 1277--1310

\bibitem{BHS08} F. Blanchard, W. Huang and L. Snoha, \emph{Topological size of scrambled sets},
Colloq. Math., \textbf{110} (2008), 293--361.

\bibitem{GX} G. Gu and J. Xiong, \emph{A note on the distribution chaos (in Chinese)},
J. South China Normal Univ. Natur. Sci. Ed., no.~3 (2004), 37--41.

\bibitem{HY02}W. Huang and X. Ye, \emph{Devaney's chaos or 2-scattering implies Li-Yorke's chaos},
Topology Appl., \textbf{117} (2002), 259--272.

\bibitem{HY06} W. Huang and X. Ye, \emph{A local variational relation and applications}, Israel J. Math., \textbf{151}
(2006), 237-280.

\bibitem{Iwanik} A.~Iwanik, \emph{Independence and scrambled sets for chaotic mappings},
The mathematical heritage of C. F. Gauss, 372--378, World Sci. Publ., River Edge, NJ, 1991.

\bibitem{L11} J.~Li, \emph{Chaos and entropy for interval maps},
J. Dynam. Differential Equations, \textbf{23} (2011), 333--352.

\bibitem{LT10} J. Li and F. Tan, \emph{The equivalence relationship between Li-Yorke $\delta$-chaos and
distributional $\delta$-chaos in a sequence (in Chinese)},
J. South China Normal Univ. Natur. Sci. Ed., no.~3 (2010), 34--38.

\bibitem{LY75} T. Li and J. Yorke, \emph{Period three implies chaos},
Amer. Math. Monthly, \textbf{82} (1975), 985--992.

\bibitem{M11}T.K.S. Moothathu, \emph{Syndetically proximal pairs}, J. Math. Anal. Appl. \textbf{379} (2011), 656--663.

\bibitem{My64} J. Mycielski, \emph{Independent sets in topological algebras}, Fund. Math., \textbf{55} (1964), 137--147.

\bibitem{POTrans} P. Oprocha, \emph{Distributional chaos revisited},
Trans. Amer. Math. Soc. \textbf{361} (2009), 4901--4925.

\bibitem{O10} P. Oprocha, \emph{Families, filters and chaos}, Bull. London Math. Soc., \textbf{42} (2010), 713--725.

\bibitem{Pikula} R. Piku\l{}a, \emph{On some notions of chaos in dimension zero},
Colloq. Math., \textbf{107} (2007), 167--177.

\bibitem{SS94} B. Schweizer and J. Sm\'{\i}tal, \emph{Measure of chaos and a spectral decomposition of
dynamical systems on the interval}, Trans. Amer. Math. Soc., \textbf{334} (1994), 737--754.

\bibitem{SmitalOLS} J. Sm\'{\i}tal, \emph{Chaotic functions with zero topological entropy}, Trans. Amer. Math. Soc.,
\textbf{297} (1986) 269--282.

\bibitem{TF11} F. Tan and H. Fu, \emph{On-distributional $n$-chaos}, 2011, preprint.

\bibitem{TX09} F. Tan and J. Xiong, \emph{Chaos via Furstenberg family couple},
Topology Appl., \textbf{156} (2009), 525--532.

\bibitem{Wang07} L. Wang, G. Huang and S. Huan, \emph{Distributional chaos in a sequence},
Nonlinear Anal. \textbf{67} (2007) 2131--2136.

\bibitem{X05} J. Xiong, \emph{Chaos in topological transitive systems},
Sci. China Ser. A., \textbf{48} (2005), 929--939.

\bibitem{XLT07}J. Xiong, J. Lu and F. Tan, \emph{Furstenberg family and chaos}, Sci. China Ser. A,
\textbf{50} (2007), 1325--1333.

\bibitem{XY91} J. Xiong and Z. Yang,
\emph{Chaos caused by a topologically mixing map}, Dynamical systems and related topics (Nagoya, 1990), 550--572,
Adv. Ser. Dynam. Systems, \textbf{9}, World Sci. Publ., River Edge, NJ, 1991.
\end{thebibliography}
\end{document}